%
%
%
%
%
%
\RequirePackage{fix-cm}
\documentclass[smallextended]{svjour3-b}       
\smartqed  
\usepackage{graphicx}
%
%
\usepackage{amssymb,xy}

%
%

\newtheorem{algorithm}[theorem]{Algorithm}

\newcommand{\Z}{\mathbb{Z}}

\newcommand{\D}{\displaystyle}

\newcommand{\Boxe}{\hfill\rule{2.2mm}{2.2mm}}


%
\begin{document}

\title{Searching for partial Hadamard matrices}




\author{V\'{\i}ctor \'{A}lvarez%
\and Jos\'{e} Andr\'{e}s Armario\and Mar\'{\i}a Dolores Frau\and F\'{e}lix Gudiel\and Mar\'{\i}a Bel\'{e}n G\"{u}emes \and Elena Mart\'{\i}n \and Amparo Osuna
}


\authorrunning{V.Alvarez, J.A.Armario, M.D.Frau, F.Gudiel, M.B.G\"{u}emes, E.Mart\'{\i}n, A.Osuna} 

\institute{V. \'{A}lvarez, J.A. Armario, M.D. Frau, F. Gudiel, E. Mart\'{\i}n, A. Osuna \at
              Dept. Matem\'{a}tica Aplicada 1, ETSII, Avda. Reina Mercedes s/n, 41012, Sevilla, Spain \\
              Tel.: +34-95455-2797, -4386, -4389, -6225, -2798, -2798\\
              Fax: +34-954557878\\
              \email{\{valvarez,armario,mdfrau,gudiel,emartin,aosuna\}@us.es}           
\and M.B. G\"{u}emes \at
              Dept. Algebra, Fac. Matem\'{a}ticas, Avda. Reina Mercedes s/n, 41012, Sevilla, Spain \\
              Tel.: +34-954556969\\
              Fax: +34-954556938\\
              \email{bguemes@us.es}           \and
                         }


\maketitle


\noindent {\em In honour of Kathy Horadam.}

\noindent {\em In Memoriam:} This paper is dedicated to the late Warwick Richard de Launey (Oct. 1, 1958 to Nov. 8, 2010), for his outstanding contributions in Design Theory and related topics.

\begin{abstract}
Three algorithms looking for pretty large partial Hadamard matrices are described. Here ``large'' means that hopefully about a third of a
Hadamard matrix (which is the best asymptotic result known so far, \cite{dLa00}) is achieved. The first one performs some kind of local exhaustive search, and consequently is expensive from the time consuming point of view. The second one comes from the adaptation of the best genetic algorithm known so far searching for cliques in a graph, due to Singh and Gupta \cite{SG06}. The last one consists in another heuristic search,
which prioritizes the required processing time better than the final size of the partial Hadamard matrix to be obtained.
 In all cases, the key idea is characterizing the adjacency
properties of vertices in a particular subgraph $G_{t}$ of Ito's Hadamard Graph $\Delta (4t)$ \cite{Ito85}, since cliques of order $m$ in
$G_{t}$ can be seen as $(m+3) \times 4t$ partial Hadamard matrices.

\keywords{Hadamard matrix \and  clique \and Hadamard Graph}
 \subclass{05B20 \and 05C69 }

\end{abstract}

\section{Introduction}

\label{sec1}

Hadamard matrices consist in $\{1,-1\}$-square matrices whose rows are pairwise orthogonal. This nice property makes Hadamard matrices
being objects for multiple applications (see \cite{HW78} and \cite{Hor06} for instance).

It may be straightforwardly checked that such a matrix must be of size 1, 2 or a multiple of 4. The Hadamard Conjecture claims that a
matrix of this type exists for every size multiple of 4. Many attempts have been devoted to prove this conjecture (both from a
constructive way \cite{Hor06} and also from a theoretical point of view in terms of asymptotic results of existence \cite{dLa09,dLK09}),
but it remains unsolved so far.

From the practical point of view, taking into account possible applications, sometimes there is no need to consider a full Hadamard
matrix. In fact, it suffices to meet a large amount of pairwise orthogonal rows \cite{AAFMO07}. This has originated the interest in
constructing {\em partial Hadamard matrices} $PH$, that is, $m \times n$ $(1,-1)$-matrices $PH$ satisfying $PH \cdot PH^T = nI_m$, for $m
\leq n$. We call $m$ the {\em depth} of $PH$.

From the orthogonality law, it is readily checked that the number $n$ of columns must be 1, 2 or a multiple of 4.

Notice that a partial Hadamard matrix does not need to be a submatrix of a proper Hadamard matrix (for instance, cliques listed in Table \ref{maxcliques}
for $7\leq t \leq 10$ are maximal but not maximum; in the sense that although no larger cliques exist containing them, there exist larger cliques, for example, those related to full Hadamard matrices).


Although partial Hadamard matrices are as useful as Hadamard matrices themselves with regards to practical purposes, unfortunately it seems
that their explicit construction is equally hard as well.

De Launey proved in \cite{dLa00} that partial Hadamard matrices of size about a third of a $4t \times 4t$ Hadamard matrix  exist for large
$t$. The proof gives a polynomial time algorithm in $t$ for constructing such a matrix. Furthermore, De Launey and Gordon proved in \cite{dLG01} that about a half of a Hadamard matrix $4t \times 4t$ exists for large $t$, assuming that the
Riemann hypothesis is true. The idea is decomposing $2t-i$ as the sum of $i$ odd prime numbers $p_i$, $2 \leq i \leq 3$, so that the juxtaposition of the corresponding Paley conference matrices provides a partial Hadamard matrix of depth $2 \min \{ p_i\} +2$.
Unfortunately, none of these methods can provide a partial Hadamard matrix of depth greater than half of a full Hadamard matrix.


In this paper, we present three new algorithms for constructing partial Hadamard matrices of size $m \times 4t$. The first one performs some kind of local exhaustive search, and consequently is expensive from the time consuming point of view. The second one comes from the adaptation of the best genetic algorithm known so far searching for cliques in a graph, due to Singh and Gupta \cite{SG06}. The third one consists in another heuristic search, which prioritizes the required processing time better than the final size of the partial Hadamard matrix obtained so far. The idea is looking for large cliques (i.e. subgraphs whose vertices are pairwise adjacent) in a subgraph $G_{t}$ of Ito's Hadamard Graph $\Delta (4t)$
\cite{Ito85}.

Although the results showed in \cite{dLa00} and \cite{dLG01} are impressive and meaningfully better than any obtained from the algorithms described in this paper, it is a remarkable fact that our algorithms may provide partial Hadamard matrices of depth greater than half of a full Hadamard matrix. Furthermore, it is possible (and desirable) to run our algorithms taking as input data partial Hadamard matrices obtained by the procedures in \cite{dLa00,dLG01}, so that deeper partial Hadamard matrices are constructed (see Table \ref{delauneymejorado}).




The paper is organized as follows.

The graph $G_{t}$ and its properties are described in Section 2. Section 3 is devoted to the description of the local exhaustive algorithm looking for
cliques in $G_{t}$. In Section 4, the two heuristics searching for cliques in $G_t$ are described.
Last section is
devoted to conclusions.

\section{The graph $G_{t}$}

\label{sec2}

In what follows, for clarity in the exposition, we will simply use $+$ and $-$ instead of $1$ and $-1$.

{\em Hadamard Graphs} were introduced by Ito in \cite{Ito85}. Originally they referred to the graph $\Delta(4t)$ whose vertices are the
$(1,-1)$-vectors of length $4t$ consisting of an even number of $1$s. The adjacency relation consists in orthogonality.

We call Hadamard graph to the subgraph $G_t$ of $\Delta(4t)$ induced by the $(1,-1)$-vectors simultaneously orthogonal to the three first
rows of a normalized Hada\-mard matrix,
$$\left(\begin{array}{ccc|ccc|ccc|ccc} + &\ldots & + & + &\ldots & +& + & \ldots  & + &  + &\ldots &+\\
+& \ldots & +&  + & \ldots & + & - & \ldots & - & - &  \ldots & -  \\
+& \ldots & + & - & \ldots & - & + & \ldots & + & - & \ldots & - \\
& \ldots &  & &\ldots & & &\ldots && &\ldots &  \end{array}\right)$$

These orthogonality conditions straightforwardly characterize the form of the vertices in $G_t$.

\begin{lemma} The vertices of $G_t$ consist of $(1,-1)$-vectors of length $4t$ where the $2t$ negative entries are distributed following
this pattern, \begin{center} \includegraphics[width=80mm]{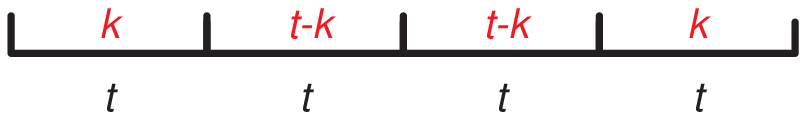} \end{center}

\noindent so that exactly $k$, $t-k$, $t-k$ and $k$ negative entries occur among every $t$ positions, for some $0 \leq k \leq t$.

\end{lemma}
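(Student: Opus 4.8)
The plan is to reduce each orthogonality condition to a single linear equation in the block sums and then solve the resulting system. First I would partition the $4t$ coordinates into four consecutive blocks $B_1,B_2,B_3,B_4$ of length $t$ each, chosen so that the three prescribed rows are constant on each block: across these blocks the first row reads $(+,+,+,+)$, the second $(+,+,-,-)$ and the third $(+,-,+,-)$. For a candidate vertex $v$, I would let $n_i$ denote the number of negative entries of $v$ in block $B_i$ and record the block sum $s_i=(t-n_i)-n_i=t-2n_i$, so that $n_i$ and $s_i$ determine each other.

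Next I would express orthogonality of $v$ with each of the three rows. Since each row is $\pm1$ and constant on every block, the inner product of $v$ with a row is just a signed sum of the $s_i$. Concretely, orthogonality with the first, second and third rows yields respectively
\begin{align*}
s_1+s_2+s_3+s_4&=0,\\
s_1+s_2-s_3-s_4&=0,\\
s_1-s_2+s_3-s_4&=0.
\end{align*}

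I would then solve this $3\times4$ system. Adding the first two equations gives $s_1+s_2=0$ (hence $s_3+s_4=0$), and adding the first and third gives $s_1+s_3=0$ (hence $s_2+s_4=0$); together these force $s_2=s_3=-s_1$ and $s_4=s_1$. Writing $n_1=k$ and translating back through $n_i=(t-s_i)/2$, this gives $n_1=n_4=k$ and $n_2=n_3=t-k$, which is exactly the claimed pattern, while the constraint $0\le k\le t$ is simply $0\le n_i\le t$. Finally I would observe two facts for free: the first equation already encodes $\sum_i n_i=2t$, so $v$ has exactly $2t$ negative (hence $2t$ positive) entries, and since $2t$ is even the vector automatically lies in $\Delta(4t)$. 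Running the same computation in reverse shows that every vector with this block distribution satisfies all three orthogonality conditions, so the characterization is an equivalence, not merely a necessary condition. No step here is a genuine obstacle; the only point requiring care is the bookkeeping that the three rows really are block-constant with the stated signs, after which the linear algebra is immediate.
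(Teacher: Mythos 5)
Your proof is correct, and it is precisely the argument the paper leaves implicit: the paper states this lemma without proof, remarking only that the orthogonality conditions ``straightforwardly characterize'' the vertices, and your block-sum reduction to the $3\times 4$ linear system is exactly that straightforward characterization carried out in full. Your closing observations (that orthogonality to the all-ones row forces $2t$ negative entries and hence even parity, and that the argument reverses to give an equivalence) are correct and complete the claim.
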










We may then classify the set of vertices in $G_t$ attending to the number $k$ of negative entries which appear in positions 1 through $t$.
In what follows, a $k$-vertex in $G_t$ refers to a vertex with precisely $k$ negative entries among positions 1 to $t$.

\begin{lemma} In particular, the number of vertices in $G_t$ is $|G_t|=\displaystyle \sum _{k=0}^t \left( \begin{array}{c}t\\k \end{array}
\right)^4$. \end{lemma}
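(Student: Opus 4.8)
The plan is to count the vertices of $G_t$ class by class, grouping them according to the integer $k$ introduced above, and then to add up the sizes of these classes. Since the value of $k$ is determined by any vertex (it is simply the number of negative entries occurring in positions $1$ through $t$), each vertex belongs to exactly one class; hence the $k$-classes are pairwise disjoint and their union is all of $G_t$, so that $|G_t| = \sum_{k=0}^{t} N_k$, where $N_k$ denotes the number of $k$-vertices.

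To compute $N_k$, I would invoke the previous Lemma, which fixes the distribution of the $2t$ negative entries of a $k$-vertex as $k$, $t-k$, $t-k$, $k$ across the four consecutive blocks of $t$ positions. The key observation is that, once $k$ is fixed, the selection of negative positions inside one block is completely independent of the selections made in the other blocks, because the four blocks occupy disjoint sets of coordinates. Thus a $k$-vertex is obtained precisely by choosing $k$ negative positions in the first block, $t-k$ in the second, $t-k$ in the third, and $k$ in the fourth, and by the multiplication principle
\[
N_k = \left(\begin{array}{c}t\\k\end{array}\right)\left(\begin{array}{c}t\\t-k\end{array}\right)\left(\begin{array}{c}t\\t-k\end{array}\right)\left(\begin{array}{c}t\\k\end{array}\right).
\]

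Finally I would use the symmetry $\left(\begin{array}{c}t\\t-k\end{array}\right)=\left(\begin{array}{c}t\\k\end{array}\right)$ to rewrite $N_k=\left(\begin{array}{c}t\\k\end{array}\right)^{4}$, and sum over $k$ from $0$ to $t$ to obtain the stated formula. I do not expect any serious obstacle here: the argument is a direct counting, and the only points deserving a moment's care are verifying that the four per-block choices are genuinely independent (immediate, since the blocks are coordinate-disjoint) and that distinct values of $k$ index disjoint vertex classes (immediate, since $k$ is a function of the vertex). Both are straightforward consequences of the structural description furnished by the preceding Lemma.
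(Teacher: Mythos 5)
Your proof is correct and is precisely the argument the paper leaves implicit: the lemma is stated as an immediate consequence of the structural description of the vertices, and your counting (partition by $k$, independent per-quarter choices giving $\binom{t}{k}^2\binom{t}{t-k}^2=\binom{t}{k}^4$, then sum over $k$) is exactly that intended justification. One can also confirm it against the paper's tables, e.g.\ $1+16+1=18$ for $t=2$ and $1+81+81+1=164$ for $t=3$.
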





The tables below give the number of $k$-vertices ($n.v.$) and their degree $\delta$ (number of adjacent vertices), for $1 \leq t \leq 7$
and $0 \leq k \leq t$. The column $total$ refers to the number of vertices and edges in $G_t$ (notice that the number of edges is half the
summation of the degree of every vertex). It is evident that for a fixed value of $k$, every $k$-vertex has the same degree (since
permuting some columns does not affect to the
orthogonality relation).

\begin{center}\begin{table*}[ht]\caption{Vertices and edges in $G_t$.}$$\begin{array}{|c||c|c||c|} t & \multicolumn{2}{c||}{1}& \\ \hline \hline k & 0 & 1 & total\\ \hline n.v.& 1&1&2\\
\hline \delta & 0 & 0 & 0 \\ \hline \end{array} \quad \begin{array}{|c||c|c|c||c|} t & \multicolumn{3}{c||}{2}& \\ \hline \hline k & 0 & 1 & 2&total\\
\hline n.v.& 1&16&1&18\\
\hline \delta & 16 & 8 & 16& 80 \\ \hline \end{array} \quad \begin{array}{|c||c|c|c|c||c|} t & \multicolumn{4}{c||}{3}& \\ \hline \hline
k & 0 & 1 & 2&3&total\\
\hline n.v.& 1&81&81&1&164\\
\hline \delta & 0 & 64 & 64& 0&5184 \\ \hline \end{array}$$ $$\begin{array}{|c||c|c|c|c|c||c|} t & \multicolumn{5}{c||}{4}& \\ \hline
\hline
k & 0 & 1 & 2&3&4&total\\
\hline n.v.& 1&256&1296&256&1&1810\\
\hline \delta & 1296 & 648 & 648&648& 1296&587088 \\ \hline \end{array} $$ $$\begin{array}{|c||c|c|c|c|c|c||c|} t & \multicolumn{6}{c||}{5}& \\
\hline \hline
k & 0 & 1 & 2&3&4&5&total\\
\hline n.v.& 1&625&10000&10000&625&1&21252\\
\hline \delta & 0&6912 & 6912 & 6912&6912& 0&73440000 \\ \hline \end{array}$$ $$\begin{array}{|c||c|c|c|c|c|c|c||c|} t & \multicolumn{7}{c||}{6}& \\
\hline \hline
k & 0 & 1 & 2&3&4&5&6&total\\
\hline n.v.& 1&1296&50625&160000&50625&1296&1&263844\\
\hline \delta & 160000&80000 &79808 & 79712&79808& 80000&160000&10521080000 \\ \hline \end{array}$$
$$\begin{array}{|c||c|c|c|c|c|c|c|c||c|} t & \multicolumn{8}{c||}{7}& \\
\hline \hline
k & 0 & 1 & 2&3&4&5&6&7&total\\
\hline n.v.& 1&2401&194481&1500625&1500625&194481&2401&1&3395016\\
\hline \delta & 0&960000&960000 &960000 & 960000&960000& 960000&0&1629606720000 \\ \hline \end{array}$$\end{table*}\end{center}


It is readily checked that the size of $G_t$ grows exponentially on $t$. Since cliques of size $m$ in $G_t$ translate to partial Hadamard
matrices $(m+3) \times 4t$, we would like to search for large cliques in $G_t$. Since the largest clique in $\Delta (4t)$ is at most of
size $4t$ (see \cite{Ito84} for details), the largest clique in $G_t$ is at most of size $4t-3$. Cliques meeting the upper bound would
correspond to full Hadamard matrices.



\begin{example}

For instance, consider the graph $G_2$, obtained from $t=2$. The picture below shows a clique of size 5, so that adding the three
normalized rows we obtain a full Hadamard matrix of size $8 \times 8$.

$\begin{array}{l} {\bf (--++++--)=1}\\ {\bf (-+-+-+-+)=2}\\ (-+-+-++-)=3\\ (-+-++--+)=4\\ {\bf (-+-++-+-)=5}\\
 (-++--+-+)=6\\ {\bf (-++--++-)=7}\\ {\bf (-++-+--+)=8}\\
(-++-+-+-)=9\\
(+--+-+-+)=10\\ (+--+-++-)=11\\ (+--++--+)=12\\ (+--++-+-)=13\\ (+-+--+-+)=14\\ (+-+--++-)=15\\ (+-+-+--+)=16\\ (+-+-+-+-)=17\\
(++----++)=18 \end{array}$

\vspace*{-7.5cm} \hspace{4cm}\includegraphics[width=75mm]{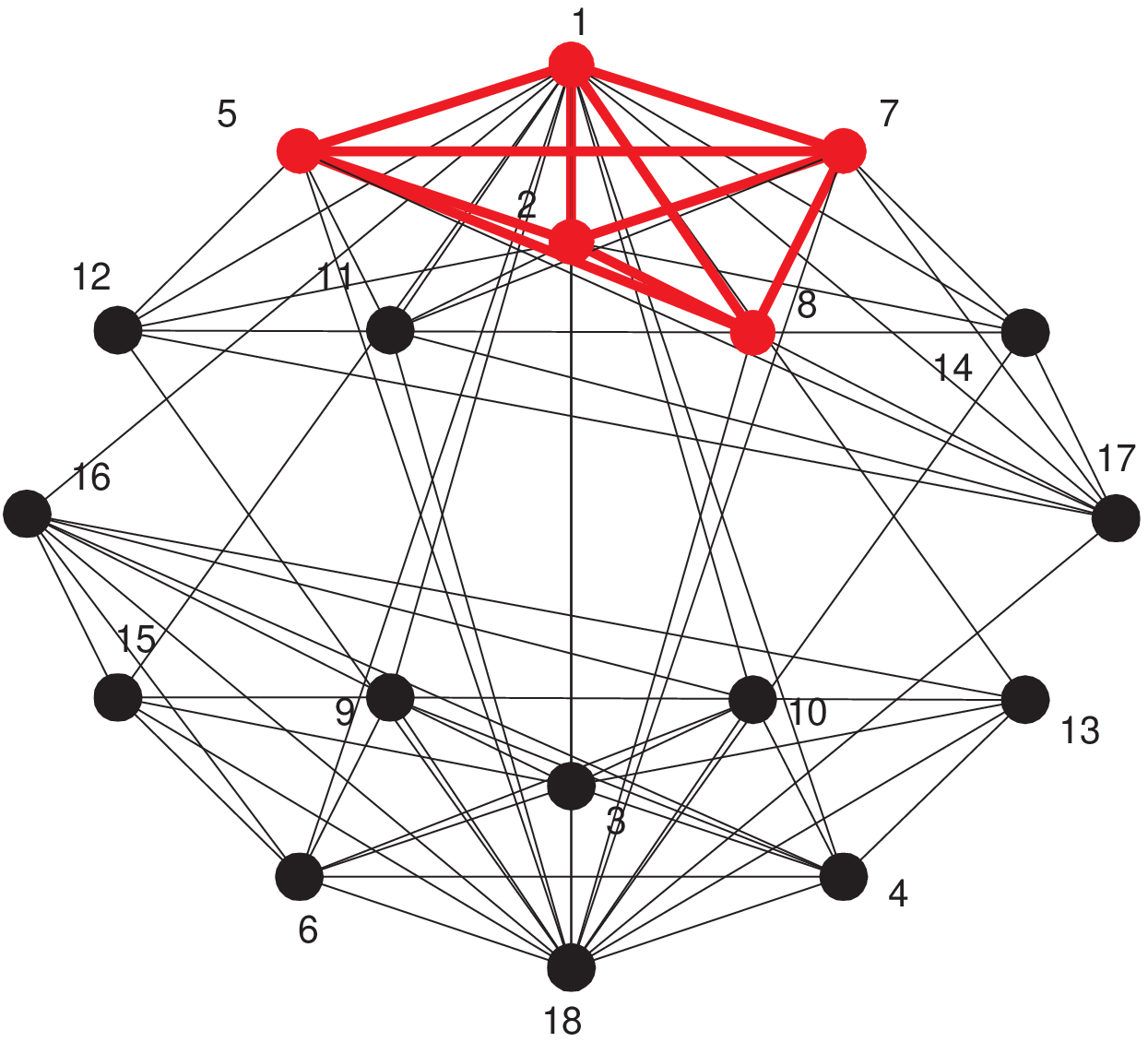}

\hfill{$\Box$}

\end{example}


A {\em maximum clique} is a clique with the maximum cardinality (which is called the maximum clique number). This notion is different from
that of maximal clique, which refers to a clique which is not a proper subset of any other clique. Thus maximal cliques need not be
maximum ones (as we had already noticed in the introduction), though the converse is always true.


Given a graph, the maximum clique problem (MCP) is to find a maximum clique, and it is NP-complete \cite{BBPP99}. Unfortunately, there is
no polynomial-time algorithm for approximating the maximum clique within a factor of $n ^{1-\epsilon}$ unless P=NP \cite{Has96}, where $n$
is the number of the vertices of the graph. Moreover, there is no polynomial-time algorithm approximating the clique number within a
factor of $\displaystyle \frac{n}{(\log n)^{1-\epsilon}}$ unless NP=ZPP \cite{Kho01}.

Anyway, our purpose here is to design an algorithm for constructing sufficiently large cliques in $G_t$, at least of depth greater than
a third of a full Hadamard matrix. To this end, we need to study the properties of $G_t$ in a more detailed way.

In what follows, for brevity, we will adopt the additive notation for representing Hadamard matrices, so that the $1$s turn to $0$s and the
$-1$s turn to $1$s.

This way, $k$-vectors in $G_t$ are now described as $(0,1)$-vectors of length $4t$ consisting of precisely $2t$ ones (and hence $2t$
zeros), which are distributed in the following way: there are exactly $k$ ones in positions 1 through $t$, other $t-k$ ones in positions
$t+1$ through $2t$, another $t-k$ ones in positions $2t+1$ through $3t$, the last $k$ ones being located in positions $3t+1$ through $4t$.

Each of these $k$-vectors may be straightforwardly codified as an integer, assuming that the $k$-vector is the binary representation of a
decimal number. Therefore cliques are codified as lists of integers, each of them being the decimal representation of a binary number
consisting of $2t$ ones and length less or equal to $4t$.

\begin{lemma} \label{lema1} Actually, it can be assumed that $0\leq k \leq \D \lfloor \frac{t}{2} \rfloor$. \end{lemma}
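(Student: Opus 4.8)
Lemma~\ref{lema1} asserts that in searching for large cliques in $G_t$, we may restrict attention to $k$-vertices with $0 \le k \le \lfloor t/2 \rfloor$, rather than the full range $0 \le k \le t$. The plan is to exhibit a graph automorphism (or at least a clique-preserving bijection) of $G_t$ that sends each $k$-vertex to a $(t-k)$-vertex, so that any clique using ``high-$k$'' vertices is carried to an isomorphic clique using only ``low-$k$'' vertices. Since $k$ and $t-k$ cannot both exceed $\lfloor t/2 \rfloor$, pairing them up collapses the range to the lower half.

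**The map I would use.** Recall that a $k$-vertex is a $(0,1)$-vector of length $4t$ with exactly $k$ ones in positions $1$–$t$, then $t-k$, then $t-k$, then $k$ ones in the four successive blocks of length $t$. I would define an involution $\sigma$ on the $4t$ coordinates that swaps the first block with the fourth block and the second block with the third block (reversing the block-order, or equivalently permuting columns by $i \mapsto 4t+1-i$ block-wise). Under $\sigma$, a vertex with block-counts $(k,\,t-k,\,t-k,\,k)$ is sent to a vector with block-counts $(k,\,t-k,\,t-k,\,k)$ read in reverse, i.e.\ $(k,\,t-k,\,t-k,\,k)$ — which fixes $k$, so that simple swap is not enough. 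Instead I would compose with a complementation within appropriate blocks, or better, use the column permutation that swaps block~$1$ with block~$2$ and block~$3$ with block~$4$: this sends counts $(k,t-k,t-k,k)$ to $(t-k,k,k,t-k)$, turning a $k$-vertex into a $(t-k)$-vertex. The essential point to verify is that this permutation is a symmetry of the ambient configuration, i.e.\ it permutes the three fixed normalized rows among themselves (up to sign) and hence maps $G_t$ bijectively onto itself.

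**Checking that it preserves $G_t$ and adjacency.** The three normalized rows displayed in Section~\ref{sec2} have the block-patterns $(+,+,+,+)$, $(+,+,-,-)$ and $(+,-,+,-)$. Swapping block~$1 \leftrightarrow$ block~$2$ and block~$3 \leftrightarrow$ block~$4$ leaves row~$1$ invariant, leaves row~$2$ invariant, and sends row~$3$'s pattern $(+,-,+,-)$ to $(-,+,-,+)$, which is the negation of row~$3$. Since negating a whole row does not change which vectors are orthogonal to it, the set of vectors orthogonal to all three rows — that is, the vertex set of $G_t$ — is preserved. Orthogonality between two vertices depends only on counts of agreements and disagreements, which any column permutation preserves, so $\sigma$ is a graph automorphism. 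Consequently it maps cliques to cliques of the same size, and in particular a clique realizing the maximum clique number can be assumed to meet only $k$-vertices with $k \le \lfloor t/2 \rfloor$ (replacing each $k$-vertex with $k > \lfloor t/2 \rfloor$ by its $\sigma$-image).

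**Main obstacle.** The only delicate point is confirming that the chosen column permutation genuinely stabilizes $G_t$ — i.e.\ that it preserves (up to row negation) the span/orthogonal complement of the three normalized rows, not merely the block-count statistics. I would make this rigorous by the row-pattern computation above, since a column permutation that fixes each of the three rows up to sign automatically fixes the common orthogonal complement. Once that symmetry is established the rest is immediate, so the heart of the argument is simply pinning down the right permutation and reading off its effect on the three generator rows.
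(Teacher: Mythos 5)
Your construction of the block-swap $\sigma$ (quarters $1\leftrightarrow 2$, $3\leftrightarrow 4$) is sound as far as it goes: it is indeed a column permutation fixing the first two normalized rows, negating the third, hence an automorphism of $G_t$, and it does send $k$-vertices to $(t-k)$-vertices. The gap is in your final step, ``replacing each $k$-vertex with $k > \lfloor t/2 \rfloor$ by its $\sigma$-image.'' An automorphism preserves adjacency only \emph{globally}: $u \sim v$ iff $\sigma(u) \sim \sigma(v)$. It does not satisfy $N(\sigma(u)) = N(u)$, so if a clique $C$ contains both a high-$k$ vertex $u$ and a low-$k$ vertex $v$, the set $(C \setminus \{u\}) \cup \{\sigma(u)\}$ need not be a clique, because $\sigma(u)$ may fail to be orthogonal to $v$. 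Concretely, in $G_2$ (additive notation, blocks separated by bars) the $1$-vertices $u = (10|10|10|01)$ and $v = (01|01|10|01)$ agree in exactly $4$ positions and so are orthogonal, but $\sigma(u) = (10|10|01|10)$ agrees with $v$ in $0$ positions and is not orthogonal to it. Applying $\sigma$ to the \emph{whole} clique is legitimate but gains nothing: a clique mixing small and large values of $k$ is carried to another clique mixing large and small values of $k$, so the restriction to $k \leq \lfloor t/2 \rfloor$ never follows.

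What the lemma needs is a map sending $k$-vertices to $(t-k)$-vertices that fixes every vertex's neighborhood, so that vertices can be exchanged one at a time inside a clique. That map is plain negation $\mathbf{v} \mapsto -\mathbf{v}$ (complementation of all $4t$ bits), which is exactly the paper's one-line proof: negating a row does not change the set of vectors orthogonal to it. Negation turns block counts $(k, t-k, t-k, k)$ into $(t-k, k, k, t-k)$, keeps the vector inside $G_t$ (orthogonality to the three normalized rows is preserved under negation), and a vector $\mathbf{w}$ is orthogonal to $\mathbf{v}$ iff it is orthogonal to $-\mathbf{v}$. Hence any vertex of a clique with $k > \lfloor t/2 \rfloor$ may be replaced by its negation, a $(t-k)$-vertex, and the result is still a clique of the same size (note $-\mathbf{v}$ cannot already lie in the clique, since $\mathbf{v}$ and $-\mathbf{v}$ are not orthogonal to each other). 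This vertex-wise property is also what the paper relies on immediately after the lemma, when it generates the full adjacency list of a vertex from the orthogonal $s$-vectors with $s \leq \lfloor t/2 \rfloor$ ``and then adding their complements''---a statement your $\sigma$ cannot deliver.
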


\begin{proof} This is a straightforward consequence of the fact that the negation of a row does not affect to the set of its orthogonal
vectors.

\Boxe \end{proof}

That being so, the full set of vertices adjacent to a given $k$-vertex {\bf v} may be obtained by calculating those $s$-vectors {\bf w}
orthogonal to {\bf v}, for $0 \leq s \leq \lfloor \frac{t}{2} \rfloor$, and then adding their complements. Notice that with the additive
notation at hand, a $k$-vector {\bf v} and a $s$-vector {\bf w} are orthogonal if and only if they share exactly $2t$ bits.

We now describe a procedure for determining a set $\delta _{\bf v}$ of generators for the adjacency list related to a fixed
$k$-vector ${\bf v}$, that is, generating those $s$-vectors {\bf w} orthogonal to ${\bf v}$.

In order to compute the total amount of coincidences between {\bf v} and {\bf w}, one may split the vectors by quarters, and count the
amount $i$ of $1$-bits (for the first and last quarters) and $0$-bits (for the second and third quarters) that these vectors share in each
of these basic quarters.

\begin{lemma}\label{lema2} At the first and fourth (resp., the second and third) quarters the number $i$ of coincidences in $1$s (resp,
in $0$s) runs in the range $[\max (0,s+k-t), \min (k,s)]$. Actually, assuming the conditions in Lemma \ref{lema1}, $i \in [0,\min (k,s)]$.
\end{lemma}

\begin{lemma}\label{lema3} At each quarter, the number $\alpha _i$ of total coincidences (both in $1$s and $0$s) satisfies $\alpha
_i=t-s-k+2i$, and runs in the range $[|t-k-s|,t-|s-k|]$. Actually, assuming the conditions in Lemma \ref{lema1}, $\alpha_i \in
[t-s-k,t-|s-k|]$.\end{lemma}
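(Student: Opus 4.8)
The plan is to carry out a direct bit-count inside a single quarter and then push the range of $i$ through the affine formula. Fix one of the four quarters and restrict both vectors to its $t$ positions. In the first quarter, say, $\mathbf{v}$ carries $k$ ones and $\mathbf{w}$ carries $s$ ones, and the number $i$ from Lemma \ref{lema2} is the count of both-$1$ positions. First I would tabulate the four mutually exclusive position-types: writing $i$ for the both-$1$ count, the positions with $\mathbf{v}=1,\mathbf{w}=0$ number $k-i$, those with $\mathbf{v}=0,\mathbf{w}=1$ number $s-i$, and since the quarter has $t$ positions in all, the both-$0$ count is forced to be
$$t-(k-i)-(s-i)-i = t-k-s+i.$$

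The total number of coincidences in the quarter is the sum of the both-$1$ and both-$0$ counts, namely
$$\alpha_i = i + (t-k-s+i) = t-s-k+2i,$$
which is exactly the claimed formula. A symmetric count in a $0$-quarter (positions $t+1$ through $2t$, where $\mathbf{v}$ has $k$ zeros, $\mathbf{w}$ has $s$ zeros, and $i$ counts both-$0$ positions) yields the same expression, so it suffices to treat one quarter. Since $\alpha_i$ is strictly increasing in $i$, the extreme values of $\alpha_i$ are attained precisely at the endpoints of the interval for $i$ supplied by Lemma \ref{lema2}; I would then simply substitute $i=\max(0,s+k-t)$ and $i=\min(k,s)$.

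Substituting the lower endpoint gives $\alpha_i = t-s-k$ when $s+k\le t$ and $\alpha_i = s+k-t$ when $s+k> t$; these two cases combine into the single value $|t-k-s|$. Substituting the upper endpoint gives $t-s+k$ when $k\le s$ and $t+s-k$ when $s\le k$, i.e. $t-|s-k|$ in either case. This establishes $\alpha_i\in[|t-k-s|,\,t-|s-k|]$. Finally, under the hypothesis of Lemma \ref{lema1} both $k$ and $s$ lie in $[0,\lfloor t/2\rfloor]$, whence $k+s\le 2\lfloor t/2\rfloor\le t$; thus $s+k-t\le 0$ and $|t-k-s|=t-s-k$, giving the sharpened range $\alpha_i\in[t-s-k,\,t-|s-k|]$.

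The only delicate point is the bookkeeping: making sure the four position-types are mutually exclusive and exhaust the $t$ positions of the quarter, so that the both-$0$ count is forced to be $t-k-s+i$ and not something else, and then correctly resolving the two absolute values according to the signs of $s+k-t$ and of $s-k$. No genuine obstacle arises, since the map $i\mapsto\alpha_i$ is affine and monotone, so the image of an interval is simply the interval between the images of its endpoints.
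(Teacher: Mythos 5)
Your proposal is correct and follows essentially the same route as the paper: the same four-way position count within a quarter (both-$1$ positions $i$, mixed positions $k-i$ and $s-i$, forcing the both-$0$ count to be $t-k-s+i$), summing to $\alpha_i=t-s-k+2i$. The only difference is that you additionally verify the range endpoints explicitly by substituting $i=\max(0,s+k-t)$ and $i=\min(k,s)$, whereas the paper stops at the formula and lets the range follow implicitly from Lemma \ref{lema2}; this is a welcome bit of extra rigor, not a different method.
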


\begin{proof}

Assume that at a given quarter the vectors {\bf v} and {\bf w} share exactly $i$ $1$-bits. Then {\bf w} has $k-i$ $0$-bits in those
positions where the remaining $1$-bits of {\bf v} are located. Analogously, {\bf v} has $s-i$ $0$-bits in those positions where the
remaining $1$-bits of {\bf w} are located. Thus {\bf v} and {\bf w} share exactly $t-i-(k-i)-(s-i)=t-k-s+i$ $0$-bits. Since {\bf v} and
{\bf w} share $i$ $1$-bits and $t-k-s+i$ $0$-bits, the total number of coincidences is $\alpha _i=t-k-s+2i$.

\Boxe

\end{proof}

\begin{corollary} \label{coro1}

$\alpha _{i+1}=\alpha _i+2$.

\end{corollary}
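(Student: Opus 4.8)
The plan is to read the claim off directly from the closed form for $\alpha_i$ just established in Lemma \ref{lema3}. That lemma gives the explicit expression $\alpha_i = t - s - k + 2i$, in which $t$, $s$ and $k$ are held fixed while only the index $i$ varies over the admissible interval. The dependence of $\alpha_i$ on $i$ is therefore affine with leading coefficient $2$, so incrementing $i$ by one must raise $\alpha_i$ by exactly $2$; this is the entire content of the corollary.

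Concretely, I would substitute $i+1$ for $i$ in the formula of Lemma \ref{lema3} and simplify:
\begin{equation*}
\alpha_{i+1} = t - s - k + 2(i+1) = (t - s - k + 2i) + 2 = \alpha_i + 2 .
\end{equation*}
No case analysis is required, and the interval endpoints for $i$ coming from Lemma \ref{lema2} are not needed for the algebra itself.

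The one point worth flagging, rather than a genuine obstacle, is that the identity should only be invoked when both $i$ and $i+1$ are admissible indices, so that $\alpha_i$ and $\alpha_{i+1}$ are meaningful coincidence counts and not merely formal evaluations of the expression outside its intended range. This admissibility is immediate from the range $[0,\min(k,s)]$ recorded in Lemma \ref{lema2}, so the recurrence $\alpha_{i+1} = \alpha_i + 2$ holds for every consecutive pair of valid indices. Beyond this remark there is essentially nothing to prove: the corollary is simply the restatement of the already-established linearity of $\alpha_i$ in $i$ as a constant-step recurrence.
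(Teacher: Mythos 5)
Your proposal is correct and matches the paper's (implicit) argument: the corollary is stated without proof precisely because it follows by direct substitution into the formula $\alpha_i = t-s-k+2i$ established in Lemma \ref{lema3}, which is exactly what you do. Your remark about restricting to admissible indices $i, i+1 \in [0,\min(k,s)]$ is a reasonable precaution but adds nothing beyond what the paper already assumes.
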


Let denote $n=\min (k,s)$. In the conditions above, the set of total coincidences is given by $t-s-k=\alpha _0 < \ldots < \alpha
_n=t-|s-k|$. We may now describe the set of $s$-vectors adjacent to a given $k$-vector.

\begin{proposition}\label{prop2} The set of vectors orthogonal to a given $k$-vector corresponds to the full set of distributions of vectors satisfying
tuples of total coincidences $(\alpha_{i_1},\alpha _{i_2},\alpha _{i_3},\alpha _{i_4})$ such that $\alpha_{i_1}+\alpha _{i_2}+\alpha
_{i_3}+\alpha _{i_4}=2t$. \end{proposition}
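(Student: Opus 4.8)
The graph $G_t$ consists of $(0,1)$-vectors of length $4t$ with exactly $2t$ ones, distributed as $k, t-k, t-k, k$ ones in the four quarters. Two vectors are orthogonal iff they share exactly $2t$ bits (coincidences).

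Given a fixed $k$-vector $\mathbf{v}$, we want all $s$-vectors $\mathbf{w}$ orthogonal to it. By the lemmas:
- At each quarter, if they share $i$ 1-bits, the total coincidences in that quarter is $\alpha_i = t - k - s + 2i$.
- The parameter $i$ runs in $[0, \min(k,s)]$ (under Lemma 1's conditions), giving $\alpha_0 = t-s-k < \alpha_1 < \cdots < \alpha_n = t - |s-k|$ where $n = \min(k,s)$.

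**What Proposition 2 claims:**

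The set of orthogonal vectors corresponds to choosing, for each of the four quarters, a value $\alpha_{i_j}$ from the available set of $\alpha$-values, subject to the constraint that the total coincidences across all four quarters equals $2t$:
$$\alpha_{i_1} + \alpha_{i_2} + \alpha_{i_3} + \alpha_{i_4} = 2t.$$

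**Proof approach:**

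The proof is essentially a restatement/consolidation of the preceding lemmas. Here's how I'd structure it.
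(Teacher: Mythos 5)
Your proposal sets up the right ingredients but stops exactly where the proof should begin: after ``Here's how I'd structure it'' there is no structure and no argument. What you have written is a correct paraphrase of Lemma \ref{lema2}, Lemma \ref{lema3} and the orthogonality criterion (two $\pm1$ vectors of length $4t$ are orthogonal iff they agree in exactly $2t$ positions), plus a restatement of the claim --- but the two implications that constitute the proposition are never argued. Concretely, what is missing is: (i) the forward direction, that any $s$-vector $\mathbf{w}$ orthogonal to $\mathbf{v}$ determines, quarter by quarter, coincidence counts which by Lemma \ref{lema3} are necessarily of the form $\alpha_{i_j}=t-s-k+2i_j$ with $i_j\in[0,\min(k,s)]$, and which sum to $2t$ because the total number of coincidences is additive over the four quarters; and (ii) the converse, that \emph{every} tuple $(\alpha_{i_1},\alpha_{i_2},\alpha_{i_3},\alpha_{i_4})$ with $\alpha_{i_1}+\alpha_{i_2}+\alpha_{i_3}+\alpha_{i_4}=2t$ is actually realized by vectors, because the four quarters can be filled in independently (in each quarter one freely chooses which $i_j$ of $\mathbf{v}$'s $1$-bits, respectively $0$-bits, to match), and any juxtaposition of such quarters is an $s$-vector sharing exactly $2t$ bits with $\mathbf{v}$, hence orthogonal to it. Point (ii) is what justifies the words ``the full set of distributions'' in the statement, and it is the only place where anything beyond a literal restatement of the earlier lemmas happens.

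In fairness to you: the paper itself states Proposition \ref{prop2} with no proof at all, treating it as an immediate consequence of Lemma \ref{lema3}, Corollary \ref{coro1} and the observation that orthogonality means sharing exactly $2t$ bits; so your instinct that the proposition is a consolidation of the preceding lemmas coincides with the paper's implicit reading, and the realizability point (ii) is only made explicit later, in the construction preceding Proposition \ref{propelegida}. But as a standalone proof attempt yours is incomplete: announcing the approach is not the same as carrying it out, and the two short arguments above are precisely what needed to be written down.
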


\begin{proposition} The set of tuples $(\alpha _{i_1},\alpha _{i_2},\alpha _{i_3},\alpha _{i_4})$ which give rise to orthogonal $s$-vectors
are characterized as the solutions of the following system of diophantine equations
\begin{equation} \label{sis1} \left\{ \begin{array}{ccccccc} x_0 \alpha_0 &+& \ldots& +& x_n \alpha _n &=& 2t\\ x_0&+& \ldots& +&x_n&=&4\\
\multicolumn{7}{c}{x_i \in \Z: \; 0 \leq x_i \leq 4} \end{array} \right. \end{equation} Here, $n=\min (k,s)$ and $x_i$ indicates how many coincidences of the
type $\alpha _i$ must occur among the four quarters. \end{proposition}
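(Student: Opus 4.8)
The plan is to read the statement as a re-encoding of the orthogonality condition already isolated in Proposition \ref{prop2}. By Lemma \ref{lema3} together with Corollary \ref{coro1}, the number of coincidences between a fixed $k$-vector $\mathbf{v}$ and a candidate $s$-vector $\mathbf{w}$ inside any single quarter can only be one of the $n+1$ consecutive values $\alpha_0<\alpha_1<\cdots<\alpha_n$, where $n=\min(k,s)$ and $\alpha_i=t-k-s+2i$. By Proposition \ref{prop2}, $\mathbf{w}$ is orthogonal to $\mathbf{v}$ exactly when the four quarter-coincidences $(\alpha_{i_1},\alpha_{i_2},\alpha_{i_3},\alpha_{i_4})$ satisfy $\alpha_{i_1}+\alpha_{i_2}+\alpha_{i_3}+\alpha_{i_4}=2t$. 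I would therefore record, for each admissible value $\alpha_i$, the number $x_i$ of quarters attaining it, and prove that this substitution sets up an equivalence between admissible tuples and the integer solutions of \eqref{sis1}.

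For the forward direction I would take an orthogonal $s$-vector $\mathbf{w}$ and set $x_i=\#\{\,j:\alpha_{i_j}=\alpha_i\,\}$. Since there are exactly four quarters, $x_0+\cdots+x_n=4$; regrouping the sum $\sum_{j=1}^4\alpha_{i_j}=2t$ by collecting equal terms gives $x_0\alpha_0+\cdots+x_n\alpha_n=2t$; and each $x_i$ is a non-negative integer bounded by the four quarters, so $0\le x_i\le 4$. Thus $(x_0,\dots,x_n)$ is an element of $\Z^{\,n+1}$ solving the stated system, which establishes that every orthogonal vector yields a solution.

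The converse is the step I expect to carry the real content. Given any integer solution $(x_0,\dots,x_n)$ of \eqref{sis1}, I must exhibit an actual $s$-vector $\mathbf{w}$ whose quarter-coincidences realize exactly those multiplicities and which is consequently orthogonal to $\mathbf{v}$. The delicate point is realizability within one quarter: attaining $\alpha_i=t-k-s+2i$ total coincidences means forcing precisely $i$ of the $k$ one-bits of $\mathbf{v}$ to coincide with one-bits of $\mathbf{w}$ in that quarter, which is achievable exactly when $i$ lies in the range of Lemma \ref{lema2}, and the values $\alpha_i$ were introduced only for such $i$. Since the quarters are independent, I can assign the four prescribed values to the four quarters in any order—the system tracks only multiplicities, not positions—and construct $\mathbf{w}$ quarter by quarter. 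The equation $\sum_i x_i\alpha_i=2t$ then guarantees total coincidences $2t$, hence orthogonality by Proposition \ref{prop2}. I anticipate that the two defining equations and the box constraint are immediate bookkeeping, while verifying per-quarter realizability (and that any ordering of the assigned values is admissible) is the only nontrivial obstacle.

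\Boxe
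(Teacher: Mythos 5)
Your proof is correct and follows exactly the route the paper intends: the paper states this proposition without any proof, treating it as an immediate multiplicity-count reformulation of Proposition \ref{prop2}, with per-quarter realizability guaranteed by Lemmas \ref{lema1}--\ref{lema3}. Your explicit handling of the converse direction (realizing any solution $(x_0,\dots,x_n)$ of (\ref{sis1}) by an actual $s$-vector built quarter by quarter, using that every $\alpha_i$ with $0\le i\le n$ is attainable in each quarter) supplies the one step the paper leaves implicit, and it is sound.
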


We now give a constructive way to solve the system above.

\begin{proposition}\label{prop1}  There exists a solution for the system (\ref{sis1}) iff $4 \alpha _0 \leq 2t \leq 4\alpha _{n}$.
\end{proposition}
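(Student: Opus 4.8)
The plan is to exploit the arithmetic-progression structure of the coincidence values $\alpha_i$ to collapse the two-equation system (\ref{sis1}) into a single range condition. First I would invoke Corollary \ref{coro1}, which gives $\alpha_i = \alpha_0 + 2i$ for $0 \le i \le n$. Substituting this into the first equation of (\ref{sis1}) and using the second equation $x_0 + \cdots + x_n = 4$, the left-hand side becomes
$$\sum_{i=0}^n x_i\alpha_i = \alpha_0\sum_{i=0}^n x_i + 2\sum_{i=0}^n i\,x_i = 4\alpha_0 + 2\sum_{i=0}^n i\,x_i,$$
so the system is equivalent to finding non-negative integers $x_0,\dots,x_n$ with $\sum_i x_i = 4$ and $\sum_i i\,x_i = t - 2\alpha_0$.

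Next, setting $S := \sum_{i=0}^n i\,x_i$, I observe that under the constraints $x_i \ge 0$ and $\sum_i x_i = 4$, the quantity $S$ is a sum of four indices drawn (with repetition) from $\{0,1,\dots,n\}$. The key step is to show that, as the $x_i$ range over all such distributions, $S$ takes \emph{precisely} the integer values in $[0,4n]$: the minimum $0$ is attained by placing all four units at index $0$, the maximum $4n$ by placing them all at index $n$, and every intermediate integer is reachable because shifting a single unit from index $i$ to index $i+1$ changes $S$ by exactly $1$ while preserving both constraints. Hence the reduced system is solvable if and only if $0 \le t - 2\alpha_0 \le 4n$; here $t - 2\alpha_0$ is automatically an integer (indeed $\alpha_0 = t-s-k$ by Lemma \ref{lema3}, so $t - 2\alpha_0 = 2(s+k)-t$), so there is no parity obstruction.

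Finally I would translate the range condition back into the stated form. Since $\alpha_n = \alpha_0 + 2n$, we have $4n = 2\alpha_n - 2\alpha_0$, so $0 \le t - 2\alpha_0$ rearranges to $4\alpha_0 \le 2t$, while $t - 2\alpha_0 \le 4n$ rearranges to $2t \le 4\alpha_n$, giving exactly $4\alpha_0 \le 2t \le 4\alpha_n$. The bound $x_i \le 4$ imposed in (\ref{sis1}) needs no separate attention, being forced by $x_i \ge 0$ and $\sum_i x_i = 4$. For the converse direction the necessity of the range is transparent anyway: any solution satisfies $4\alpha_0 \le \sum_i x_i\alpha_i = 2t \le 4\alpha_n$ simply because $\alpha_0 \le \alpha_i \le \alpha_n$. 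The one genuinely non-routine point — and the part I would be most careful to justify — is the surjectivity claim that $S$ hits every integer in $[0,4n]$, since this is what upgrades the obvious necessity of the bound into sufficiency and makes the statement an honest ``iff''.
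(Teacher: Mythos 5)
Your proof is correct and follows essentially the same route as the paper's: both rest on the arithmetic-progression structure $\alpha_{i+1}=\alpha_i+2$ (Corollary \ref{coro1}) together with the observation that sums of four such values sweep out the entire admissible range, making the obvious necessity of the bounds also sufficient. The only difference is one of rigor — the paper merely asserts that every even number in $[4\alpha_0,4\alpha_n]$ is attainable as such a sum, whereas you normalize via $S=\sum_i i\,x_i$ and justify the coverage explicitly by the unit-shift argument, which is a welcome filling-in of the step the paper glosses over.
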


\begin{proof}

From Corollary \ref{coro1}, we know that the difference between two consecutive $\alpha _{i_j}$ is 2.

Since $4 \alpha _0$, $2t$ and $4 \alpha _n$ are even and taking into account Corollary \ref{coro1}, we conclude that  every even number in
the range $[4 \alpha _0, 4 \alpha _n]$ may be (not uniquely, in general) written as a combination $\alpha _{i_1}+ \alpha _{i_2}+ \alpha
_{i_3}+ \alpha _{i_4}$ for some values $\mbox{$\alpha _0 \leq \alpha _{i_1} \leq \alpha _{i_2}\leq  \alpha _{i_3}\leq \alpha _{i_4} \leq
\alpha _n$}$.

\Boxe

\end{proof}

The condition above may be straightforwardly generalized for the case of solutions related to tuples of the type $(\alpha _{i_1},\alpha
_{i_2},\alpha _{i_3},\alpha _{i_4})$, $\alpha _{i_1}\leq \alpha _{i_2}\leq \alpha _{i_3}\leq \alpha _{i_4}$.

\begin{corollary} \label{bueno}
Fixed $t$ and $0 \leq k,s \leq \lfloor \frac{t}{2} \rfloor$, the set $sol$ of solutions to the system (\ref{sis1}) may be constructed in the following way:\\
{\tt
$sol\leftarrow  \emptyset$\\
$\alpha _0 \leftarrow t-k-s$\\
$\alpha _n \leftarrow t-|k-s|$\\
for $i_1$ from $\max \{\alpha_1, 2 t - 3 \alpha_n\}$ to $\min\{\alpha_n, \lfloor \frac{2 t}{4}\rfloor \}$ with step 2 do\\
\hspace*{.5cm} for $i_2$ from $\max\{i_1, 2 t - i_1 - 2 \alpha_n\}$ to $\min\{\alpha_n, \lfloor \frac{2 t - i_1}{3}\rfloor \}$ with step 2\\
\hspace*{.5cm}  do\\
\hspace*{1cm} $\mbox{for $i_3$ from $\max\{i_2, 2 t - i_1 - i_2 - \alpha_n\}$ to $\min\{\alpha_n, \lfloor \frac{2 t - i_1 - i_2}{2}\rfloor \}$  with}$\\
 \hspace*{1cm} $\mbox{step 2 do}$\\
\hspace*{1.5cm} $sol\leftarrow sol \cup \{\{i_1,i_2,i_3,2t-i_1-i_2-i_3\}\}$\\
\hspace*{1cm} od\\
\hspace*{.5cm} od\\
od }

\end{corollary}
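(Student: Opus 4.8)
The plan is to read the triple loop as an enumeration---with $(i_1,i_2,i_3)$ running and $i_4:=2t-i_1-i_2-i_3$ determined---of all non-decreasing quadruples drawn from the arithmetic progression $A=\{\alpha_0,\alpha_0+2,\dots,\alpha_n\}$ whose entries sum to $2t$. By Corollary \ref{coro1} this progression is precisely the set of admissible total-coincidence values, and such a quadruple carries the same information as a solution of (\ref{sis1}): the number of times a value $\alpha_i$ occurs is the corresponding $x_i$, so $\sum x_i=4$ and $0\le x_i\le 4$ hold automatically, and ordering the quadruple non-decreasingly selects one representative per multiset. I would therefore split the proof into soundness (every quadruple the loop outputs is a solution) and completeness (every solution is output, exactly once).

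For soundness I would check that each loop bound enforces one defining constraint of a valid solution. The step $2$, together with the parities of the starting points, guarantees that every $i_j$ lies in $A$: since $\alpha_n-\alpha_0=2\min(k,s)$ and $2t$ is even, the lower endpoints $2t-3\alpha_n$, $2t-i_1-2\alpha_n$ and $2t-i_1-i_2-\alpha_n$ are all congruent to $\alpha_0$ modulo $2$, so the values generated and the final $i_4=2t-i_1-i_2-i_3$ inherit the parity of $\alpha_0$. The ``$\min$ with $\alpha_n$'' caps each coordinate above by $\alpha_n$, and the lower endpoint of the $i_3$-loop forces $i_4\le\alpha_n$; the lower clamp on $i_1$ keeps every coordinate at or above the minimum admissible value $\alpha_0$. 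The relations $i_1\le i_2\le i_3$ are imposed directly by using $i_{j-1}$ as a lower endpoint for $i_j$, and the floor upper bound of the innermost loop yields the last order relation, since for integers $i_3\le\lfloor(2t-i_1-i_2)/2\rfloor$ is exactly $2i_3\le 2t-i_1-i_2$, i.e. $i_3\le i_4$. Hence every output is a non-decreasing quadruple in $A$ summing to $2t$, that is, a solution of (\ref{sis1}).

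For completeness I would run the argument in reverse: given a solution, order it as $a_1\le a_2\le a_3\le a_4$ with each $a_j\in A$ and $\sum a_j=2t$, and verify that $i_1=a_1,\,i_2=a_2,\,i_3=a_3$ lies inside the three loop ranges. The necessary-condition half of each bound is where the work sits. From $a_2,a_3,a_4\le\alpha_n$ one gets $a_1\ge 2t-3\alpha_n$, while $4a_1\le\sum a_j=2t$ gives $a_1\le\lfloor 2t/4\rfloor$; likewise $a_2\ge 2t-a_1-2\alpha_n$ (from $a_3+a_4\le 2\alpha_n$) and $3a_2\le 2t-a_1$ place $a_2$ in range, and $a_3\ge 2t-a_1-a_2-\alpha_n$ (from $a_4\le\alpha_n$) together with $2a_3\le 2t-a_1-a_2$ place $a_3$; the inequalities $a_1\le a_2\le a_3$ and $a_j\ge\alpha_0$ supply the remaining clamps, and $a_4$ is then forced. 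Because the chosen representative is non-decreasing, distinct multisets give distinct triples $(i_1,i_2,i_3)$, so nothing is listed twice.

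The main obstacle I anticipate is the interaction of the floor functions with the step-$2$ constraint: one must confirm that the clamped endpoints $\max\{\cdots\}$ and $\min\{\cdots,\lfloor\cdot\rfloor\}$ actually land on elements of $A$ rather than falling strictly between two admissible values, so that neither is a genuine solution skipped nor a spurious quadruple produced. This is exactly where the common difference $2$ from Corollary \ref{coro1} and the evenness of $2t$ must be used; once the parity bookkeeping is settled, the rest is a mechanical translation of the defining inequalities $\alpha_0\le i_1\le i_2\le i_3\le i_4\le\alpha_n$ with $i_1+i_2+i_3+i_4=2t$ into the loop limits.
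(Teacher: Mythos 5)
Your proof is correct and follows essentially the same route the paper takes (implicitly, via Proposition \ref{prop1} and the remark that its condition ``may be straightforwardly generalized'' to partial tuples): the loop bounds are exactly the necessary-and-sufficient conditions for a non-decreasing tuple from the progression $\alpha_0,\alpha_0+2,\dots,\alpha_n$ to extend to a solution summing to $2t$, and your parity bookkeeping is the paper's use of Corollary \ref{coro1}. One remark: the stated bound $\max\{\alpha_1, 2t-3\alpha_n\}$ is a typo for $\max\{\alpha_0, 2t-3\alpha_n\}$, which you silently and correctly repaired --- with $\alpha_1$ the first loop would be empty in the paper's own $t=5$, $k=s=2$ example (it would start at $3$ and end at $\lfloor 10/4\rfloor=2$), missing both solutions $(1,1,3,5)$ and $(1,3,3,3)$.
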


Given a tuple $(\alpha _{i_1},\alpha _{i_2},\alpha _{i_3},\alpha _{i_4})$ solution to (\ref{sis1}), construct the four matrices $N_k$ whose rows are those vectors satisfying $\alpha _{i_k}$ total coincidences with the corresponding quarter of ${\bf v}$. By construction, the juxtaposition of any of the rows of these matrices gives a vector orthogonal to ${\bf v}$.

\begin{proposition} \label{propelegida}
A set $\delta _{\bf v}$ of generators for the adjacency list of ${\bf v}$ may be straightforwardly constructed in terms of matrices of the type above.
\end{proposition}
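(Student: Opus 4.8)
The plan is to convert the combinatorial description accumulated in Lemmas \ref{lema2}--\ref{lema3}, Proposition \ref{prop2} and Corollary \ref{bueno} into an explicit finite family of matrices that generates the adjacency list, and then to verify that juxtaposing their rows produces exactly the orthogonal vectors, up to complementation.

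First I would fix a target $s$ with $0 \leq s \leq \lfloor t/2 \rfloor$ and invoke Corollary \ref{bueno} to obtain the finite set of solutions $\{i_1, i_2, i_3, i_4\}$ of the system (\ref{sis1}). Each such solution records, as an unordered multiset, how many quarters should realize each admissible value of total coincidences $\alpha_i = t-k-s+2i$. Since the four quarters of ${\bf v}$ play distinct roles, I would expand every multiset into its distinct ordered arrangements $(\alpha_{i_1}, \alpha_{i_2}, \alpha_{i_3}, \alpha_{i_4})$, thereby prescribing one coincidence value per individual quarter.

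Second, for a fixed ordered tuple I would build, quarter by quarter, the four matrices $N_1, N_2, N_3, N_4$ whose rows are exactly the length-$t$ $(0,1)$-blocks that carry the number of ones forced by the $s$-vector profile $(s, t-s, t-s, s)$ and meet the corresponding quarter of ${\bf v}$ in precisely $\alpha_{i_j}$ positions; by Lemma \ref{lema3} this coincidence count fixes the number of shared $1$-bits, so each $N_j$ is produced by a transparent choice of which matched positions to occupy. The juxtaposition of one row from each $N_j$ then has exactly $\alpha_{i_1} + \alpha_{i_2} + \alpha_{i_3} + \alpha_{i_4} = 2t$ coincidences with ${\bf v}$, hence is orthogonal to it, since orthogonality amounts to sharing exactly $2t$ bits; this gives soundness. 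Collecting these quartets of matrices over all tuples and all admissible $s$ yields the claimed generating set $\delta_{\bf v}$.

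Third comes completeness, which I expect to be the delicate point. Given any $s$-vector ${\bf w}$ orthogonal to ${\bf v}$, I would read off its quarterwise coincidence pattern; Proposition \ref{prop2} guarantees this pattern is one of the tuples produced above, so ${\bf w}$ is recovered as a juxtaposition of rows drawn from the associated matrices, showing that $\delta_{\bf v}$ generates every orthogonal $s$-vector. The full adjacency list of ${\bf v}$ is then obtained by adjoining the complements of the generated vectors, since negating a vector preserves orthogonality and, by Lemma \ref{lema1}, sweeps the range $s > \lfloor t/2 \rfloor$ left out above. The main obstacle is precisely this completeness argument together with the bookkeeping that turns each unordered multiset solution into the correct ordered, quarter-indexed tuples without omission or double counting; once the exact correspondence between coincidence patterns and solutions of (\ref{sis1}) is pinned down, the remaining assembly is mechanical.
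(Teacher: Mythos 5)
Your proposal is correct and follows essentially the same route as the paper: the paper's own justification is precisely the construction described immediately before the proposition (for each solution tuple of system (\ref{sis1}) obtained via Corollary \ref{bueno}, build the four matrices $N_k$ of quarter-blocks with the prescribed coincidence counts, juxtapose rows to get orthogonal vectors), with completeness following from Proposition \ref{prop2}, the orderings of each unordered solution handled by permuting/negating columns, and the complements adjoined at the end exactly as you describe. The only difference is presentational: the paper leaves the soundness/completeness verification implicit and illustrates it through the worked example for $t=5$, $k=2$, whereas you spell it out explicitly.
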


In spite of the fact that the size (both in edges and vertices) of
$G_t$ grows exponentially in $t$, the procedure described in Proposition \ref{propelegida} is a cheaper way (in terms of both time and space) of saving this information.

We will illustrate now how the proposition above works by means of an example. In order to simplify the reading, in what follows we will
use a vertical line $|$ to separate the different quarters of a $k$-vector.

\begin{example}

Let us consider the case $t=5$, $k=2$ and ${\bf v}=684646$ (its binary representation gives the $2$-vector ${\bf
v}=(10100|11100|10011|00110)$). We are going to calculate the full set of $s$-vectors orthogonal to {\bf v}, for $\mbox{$0 \leq s \leq
2=\lfloor \frac{5}{2} \rfloor$}$ (recall that the remaining orthogonal vectors are obtained by simply interchanging the $0$-bits and
$1$-bits, since they are the negation of the vectors just calculated).

\begin{enumerate}

\item Case $s=0$.

From Lemma \ref{lema2}, we know that there is just one value for the number $i$ of coincidences in $1$-bits, namely $i=0$. Consequently,
there is just one value $\alpha _i$, namely $\alpha _0=3$.

Since $4 \alpha _0=12 \neq 10=2 \cdot 5$, the system (\ref{sis1}) has no solutions, so there is no $0$-vector orthogonal to {\bf v}.

This was evident from the very beginning, since there is only one $0$-vector, ${\bf w}=32736$ (i.e. ${\bf w}= (00000|11111|11111|00000)$),
and it is not orthogonal to {\bf v}.

\item Case $s=1$.

From Lemma \ref{lema2}, we know that $i \in \{0,1\}$, and hence $\alpha _i \in \{2,4\}$. The set of different ways in which four $\alpha _i$
may be orderless chosen to sum 10 is described by the system (\ref{sis1}),
$$\left\{ \begin{array}{rcrcc} 2x_0&+&4x_1&=&10\\
x_0&+&x_1&=&4 \end{array}\right.$$

Since $4\alpha _0=8 \leq 10 \leq 4 \alpha _1=16$, there exist solutions for the system. In fact, there is just one solution,
$(x_0,x_1)=(3,1)$, which corresponds to the following distribution of total coincidences (up to reordering): $(2,2,2,4)$.

In order to explicitly construct those $1$-vectors {\bf w} meeting the distribution $(2,2,2,4)$, we have to find those $1$-vectors with $0$
coincidences with {\bf v} in $1$-bits in the first quarter, $0$ coincidences in $0$-bits in the second and third quarters, and $1$
coincidence in $1$-bits in the fourth quarter.

Since {\bf v} is a $2$-vector, in the first quarter there are $\left( \begin{array}{c} 5-2 \\ 1 \end{array} \right)$ choices for placing
the $1$-bit of {\bf w} among the $0$-bits of {\bf v}. Analogously, there are $\left( \begin{array}{c} 5-2 \\ 1 \end{array} \right)$
choices for placing the $0$-bit of {\bf w} among the $1$-bits of {\bf v} in the second and third quarters. And there are $\left(
\begin{array}{c} 2 \\ 1 \end{array} \right)$ choices for placing the $1$-bit of {\bf w} among the $1$-bits of {\bf v}.

In conclusion, the set of $1$-vectors meeting the distribution of total coincidences $(2,2,2,4)$ is generated by the juxtaposition of any
of the rows of the following matrices
$$\left( \begin{array}{ccccc} 0&1&0&0&0 \\ 0&0&0&1&0 \\ 0&0&0&0&1 \end{array} \right)  \times \left( \begin{array}{ccccc} 0&1&1&1&1 \\
1&0&1&1&1 \\ 1&1&0&1&1 \end{array} \right)\times \left( \begin{array}{ccccc} 0&1&1&1&1 \\ 1&1&1&0&1 \\ 1&1&1&1&0 \end{array} \right)
\times \left( \begin{array}{ccccc} 0&0&1&0&0 \\ 0&0&0&1&0 \end{array} \right)$$

Similar schemes are achieved with the remaining orderings of the valid distribution of total coincidences, $(2,2,4,2)$, $(2,4,2,2)$ and
$(4,2,2,2)$. In fact, they may be obtained by simply permuting and/or negating some suitable columns of the distribution above.

\item Case $s=2$.

From Lemma \ref{lema2}, we know that $i \in \{0,1,2\}$, and hence $\alpha _i \in \{1,3,5\}$. The set of different ways in which four $\alpha
_i$ may be orderless chosen to sum 10 is described by the system (\ref{sis1}),
$$\left\{ \begin{array}{rcrcrcc} x_0&+&3x_1&+&5x_2&=&10\\
x_0&+&x_1&+&x_2&=&4 \end{array}\right.$$

Since $4\alpha _0=4 \leq 10 \leq 4 \alpha _2=20$, there exist solutions for the system. In fact, there are just two solutions,
$(x_0,x_1,x_2)\in \{(2,1,1),(1,3,0)\}$, which correspond to the following distribution of total coincidences (up to reordering):
$(1,1,3,5)$ and $(1,3,3,3)$.

In order to explicitly construct those $2$-vectors {\bf w} meeting the distribution $(1,1,3,5)$, we have to find those $2$-vectors with $0$
coincidences with {\bf v} in $1$-bits in the first quarter, $0$ coincidences in $0$-bits in the second quarter, $1$ coincidence in
$0$-bits in the third quarter, and $2$ coincidences in $1$-bits in the fourth quarter.

Since {\bf v} is a $2$-vector, in the first quarter there are $\left( \begin{array}{c} 5-2 \\ 2 \end{array} \right)$ choices for placing
the $1$-bits of {\bf w} among the $0$-bits of {\bf v}. Analogously, there are $\left( \begin{array}{c} 5-2 \\ 2 \end{array} \right)$
choices for placing the $0$-bits of {\bf w} among the $1$-bits of {\bf v} in the second quarter. Analogously,  in the third quarter, there
are $\left( \begin{array}{c} 2 \\ 1 \end{array} \right)$ choices for placing one $0$-bit of {\bf w} among the $0$-bits of {\bf v}, and for
each of them, there are $\left( \begin{array}{c} 5-2 \\ 1 \end{array} \right)$ choices for placing the second $0$-bit of {\bf w} among the
$1$-bits of {\bf v}. Finally, there are $\left( \begin{array}{c} 2 \\ 2 \end{array} \right)$ choices (just one!) for placing the $1$-bits
of {\bf w} among the $1$-bits of {\bf v}.

In conclusion, the set of $2$-vectors meeting the distribution of total coincidences $(1,1,3,5)$ is generated by the juxtaposition of any
of the rows of the following matrices $$\left( \begin{array}{ccccc} 0&0&0&1&1 \\ 0&1&0&0&1 \\
0&1&0&1&0 \end{array} \right) \times \left( \begin{array}{ccccc} 1&0&0&1&1 \\ 0&1&0&1&1 \\ 0&0&1&1&1 \end{array} \right)\times \left(
\begin{array}{ccccc} 1&0&1&1&0 \\ 1&0&1&0&1 \\ 0&0&1&1&1\\ 1&1&0&1&0 \\ 1&1&0&0&1 \\ 0&1&0&1&1 \end{array} \right)\times \left(
\begin{array}{ccccc} 0&0&1&1&0 \end{array} \right)$$

Similar schemes are achieved from the remaining orderings of the distribution $(1,1,3,5)$.

Analogously, one may conclude that the set of $2-$vectors meeting the distribution of total coincidences $(1,3,3,3)$ (and in a similar way
its reorderings as well) is generated by the
juxtaposition of any of the rows of the following matrices $$\left( \begin{array}{ccccc} 0&0&0&1&1 \\ 0&1&0&0&1 \\
0&1&0&1&0 \end{array} \right) \times \left( \begin{array}{ccccc} 1&1&0&0&1 \\ 1&0&1&0&1 \\ 0&1&1&0&1  \\ 1&1&0&1&0  \\ 1&0&1&1&0  \\
0&1&1&1&0 \end{array} \right)\times \left( \begin{array}{ccccc} 1&0&1&1&0 \\ 1&0&1&0&1 \\ 0&0&1&1&1\\ 1&1&0&1&0 \\ 1&1&0&0&1 \\ 0&1&0&1&1
\end{array} \right)\times \left( \begin{array}{ccccc}0&0&1&0&1 \\ 0&1&1&0&0 \\ 1&0&1&0&0\\ 0&0&0&1&1 \\ 0&1&0&1&0 \\ 1&0&0&1&0 \end{array}
\right)$$

\end{enumerate}

\hfill{$\Box$}

\end{example}

We have just described a procedure for determining a set $\delta _{\bf v}$ of generators for the adjacency list related to a fixed
$k$-vector ${\bf v}$, $0 \leq k \leq \lfloor \frac{t}{2} \rfloor$. In spite of the fact that the size (both in edges and vertices) of
$G_t$ grows exponentially in $t$, this is a cheaper way (in terms of both time and space) of saving this information.







Once we know how the adjacency relation in $G_t$ looks like, we are in conditions to describe an algorithm looking for cliques in $G_t$.

\section{Local exhaustive search}

\label{sec3}

From the results described in the precedent Section, one may straightforwardly design an algorithm searching for a maximal (probably not
maximum) clique in $G_t$. Starting from a clique $C$ initially consisting of a random vertex ${\bf v}$ in $G_t$, it suffices to repeatedly
add a new vertex ${\bf w}$ to $C$, randomly chosen among those vertices simultaneously orthogonal to the vertices already in $C$.

This is somehow a local exhaustive search. Exhaustive, in the sense that repeatedly the full set of vertices simultaneously orthogonal to the nodes of a given clique is constructed. Local, in the sense that just one vertex among the full set of candidates is actually used.

We next include a pseudo-code for this algorithm.

\begin{algorithm} Searching for cliques in $G_t$.

\vspace*{.5cm}

{\tt

\noindent Input: an integer $t$\\
\noindent Output: a maximal clique in $G_t$

\vspace*{.5cm}

\noindent Select a $k$-vertex ${\bf v}$ in $G_t$\\
\noindent $clique \leftarrow \{{\bf v}\}$\\
\noindent $adj \leftarrow \delta _{\bf v}$\\
\noindent while $adj$ is not empty\{ \\
\hspace*{.5cm} ${\bf v} \leftarrow$ a random $s$-vertex in $adj$\\
\hspace*{.5cm} $clique \leftarrow clique \cup \{ {\bf v}\}$\\
\hspace*{.5cm} $adj \leftarrow adj \cap \delta_{\bf v}$\\
\noindent $\}$\\
\noindent $clique$}

\end{algorithm}


The complexity of the algorithm relays on the operations $\delta _{\bf v}$ and $adj \cap \delta _{\bf v}$. From Proposition \ref{prop1},
$\delta _{\bf v}$ may be constructed in polynomial-time. Unfortunately, the size of $adj$ increases exponentially, and so the algorithm is
both time and space consuming.

Improved versions of the algorithm might be implemented, depending on a deeper knowledge of how orthogonality on $k$-vertices works.

We now include some execution tables.

All the calculations of this section have been worked out in {\sc Mathematica 4.0}, running on a {\em Pentium IV 2.400 Mhz DIMM DDR266 512
MB}.


The table below shows, for every $2\leq t \leq 10$, the number of essays which have been executed looking for cliques in $G_t$, the
average time required in these calculations, the average size of these cliques, the largest size found so far, the time required in this
calculation and one instance among the largest cliques already found.

\begin{center}\begin{table*}[ht]\caption{Results obtained from Algorithm 1.}$$\begin{array}{|c||c|c|c|c|c|l||}\hline
t& \mbox{Essays}& \mbox{Av.Time} &\mbox{Av.Size}  & \mbox{Lar.S.} & \mbox{Time} &\mbox{Clique}  \\
\hline\hline
2 & 10 &  0.0232'' &5 & 5 & 0.016'' & 166, 101, 106, 169, 60 \\
\hline 3 & 10 & 0.039'' & 9 & 9 & 0.031'' & \begin{array}{l} 2396, 730, 881, 940, 1386,2482,1433,\\
2281, 1268\end{array} \\
\hline 4 & 10 & 0.368'' & 13 & 13 & 0.327'' & \begin{array}{l} 22166, 22874, 11698, 34776, 13251,\\ 19428, 49971, 13116, 39594, 43606,\\
38246, 7793, 26218\end{array} \\
\hline 5 & 10 & 0.369'' & 11 & 17 & 0.468'' & \begin{array}{l} 615882, 81761, 124596, 682665,\\ 315178, 323811, 183761, 808837,\\ 405190,
572306, 350936, 677490,\\ 193356, 813428, 219558, 406968,\\ 564460 \end{array} \\ \hline
\end{array}$$\end{table*}\end{center}

\begin{table}[ht]\label{maxcliques}\begin{center}$\begin{array}{|c||c|c|c|c|c|l||}\hline
t& \mbox{Essays}& \mbox{Av.Time} &\mbox{Av.Size}  & \mbox{Lar.S.} & \mbox{Time} &\mbox{Clique}  \\
\hline\hline  6 & 10& 4.128'' & 13.8 & 21 & 3.198'' & \begin{array}{l} 1943217, 2913196,4896610,1866067,\\ 13689735, 5584078, 10118748,
4943241,\\ 3324617, 6759253, 5685752, 8874722,\\ 14710001, 8830356, 6614554,4840148,\\
8771401, 3272036, 3364754,  13712184,\\ 11117018 \end{array} \\
\hline 7 & 10 & 12.19'' & 12.8 & 17 & 20.53'' & \begin{array}{l} 30121113, 153038560, 15673110,\\ 73300362, 41266505, 117929485,\\
86698833, 43440080, 105500358,\\ 176876844, 41595557, 156361628,\\ 172841777, 119125298,
205299228,\\ 79251369, 210612677 \end{array} \\
\hline 8 & 4 & 1'39'' & 13.75 & 15 & 59.37'' & \begin{array}{l} 1305024466, 1295442290, 2999574898,\\ 3795589971, 3631000788, 3248355466, \\
364489541, 2896517604, 2629324346,\\ 1255832164, 504314142, 3570677031, \\
965647459, 1387503024, 1483592489\end{array} \\ \hline  9 & 4 & 13'46'' & 14.25 & 16 & 6'34'' & \begin{array}{l} 56453847575, 43074435680,\\
41936051018, 22138975396,\\ 23156633420, 41987793978,
 \\ 37225385892, 18981806746,\\ 45452914326, 7371644105,\\ 53995320335, 52910835425, \\
60376161688, 19823315036,\\ 10578923147, 28478917930 \end{array} \\ \hline
10 & 2 & 4^h50'6'' & 16 & 16 & 3^h0'50'' & \begin{array}{l} 611551599738, 585720653408,\\ 381375537029, 727566346115,\\
90629862411, 851419790210,\\ 418284730924, 524680378162,\\ 364580931042, 154503136551,\\  747357767564, 199828304181,\\ 99982406086,
319408643342,\\ 422206490833, 34030595756 \end{array} \\ \hline
 \end{array}$\end{center}\end{table}

\vspace*{-.75cm}
As the table below shows, it is remarkable that the sizes of the largest cliques found so far are greater than $\lfloor \frac{4t}{3}
\rfloor -3$ (a third of a full Hadamard matrix of size $4t$) and even $2t-3$ (a half of a full Hadamard matrix of size $4t$), which are
the best asymptotic bounds on the depth of partial Hadamard matrices known so far.

\begin{table*}[ht]\begin{center}\caption[c]{Comparing sizes of cliques.}$\begin{array}{||c|c|c|c|c|c|c|c|c|c||} \hline t&2&3&4&5&6&7&8&9&10 \\ \hline \hline \lfloor \frac{4t}{3} \rfloor -3& -1& 1&2&3&5&6&7&9&10\\
\hline 2t-3 & 1&3&5&7&9&11&13&15&17\\ \hline Lar.S. & 5&9&13&17&21&17&15&16&16 \\ \hline \end{array}$\end{center}\end{table*}

\vspace*{-.5cm}
Although our procedure gives large partial Hadamard matrices (of depth greater than half of a full Hadamard matrix), it is very expensive both in space and time. 

It would be desirable to find a way to design a procedure running significantly faster and which nevertheless leads to large partial
Hadamard matrices as well.
We describe such an algorithm in the following section.


\section{Heuristic searches}

\label{sec4}

In this section we describe two heuristics for searching for cliques in $G_t$.

The first of them is a straightforward adaptation of the best (as far as we know) genetic algorithm for solving the maximum clique problem (MCP). Since it is very expensive in time, we then describe a second heuristic, which is much faster, in exchange of precision (in the sense that not sufficiently large cliques are obtained).

Nevertheless, it is a remarkable fact that this last procedure admits as input data a clique already constructed. Consequently, one could eventually obtain a larger clique. Experimental results suggest that this actually happens more times than one could initially think (see Table \ref{delauneymejorado}). In fact, it seems to be a good idea to combine this fast search with other more precise procedures.

\subsection{Classical GAs for MCP}

\label{subsec1}

As we commented before, finding the maximum clique of a graph is a NP-Hard problem, and consequently all known exact algorithms for this problem will run in time that grows exponentially with the number of vertices in the graph. This makes these algorithms infeasible even in case of moderately large problem instances. Therefore most of the efforts to solve the maximum clique problem are based on heuristic approaches.

In \cite{SG06} a heuristic based steady-state genetic algorithm for the maximum clique problem is described. The steady-state genetic algorithm generates cliques, which are then extended into maximal cliques by the heuristic. After comparison with the three best evolutionary approaches for the maximum clique problem, they find out that their algorithm outperforms all the three evolutionary approaches in terms of best and average clique sizes found on majority of DIMACS benchmark instances (which are the canonical family of graphs used to test MCP-algorithms).

The main features of the genetic algorithm in \cite{SG06} consist in:

\begin{itemize}

\item {\em Chromosome representation}. A $n$-length bit vector represent a chromosome (i.e. a clique), so that a value of 1 at the ith position indicates that the vertex $i$ is in the clique.

\item {\em Crossover}. They use fitness based crossover, so that a child is constructed bit by bit, receiving each time the bit from one of its parents, with probability proportional to the fitness of the parents. The vector obtained so far may not be a clique, so a repairing function is needed to transform the child into a clique.

\item {\em Repair}. First, all 1 bits corresponding to vertices with significantly low degree (in comparison with the provisional fittest individual) are changed to 0. Then, the repairing procedure introduced by Marchiori in \cite{Mar02} is used, so that repeatedly a vertex of the child is selected at random, and either it or all the vertices not adjacent to it are deleted (i.e. the corresponding bits are fixed to 0), until the child becomes a clique.

\item {\em Mutation}. Mutation consists in simple bit flip mutation, where each bit in the chromosome is flipped with a pre-fixed probability $p_m$. Once again, the repairing function is needed to guarantee that a valid chromosome (i.e. a clique) is obtained.

\item {\em Extension}. Once a valid chromosome (i.e. a clique) is constructed, an extension function is applied, in order to extend the given clique to a maximal clique. The idea is repeatedly adding a vertex with highest degree among the set $S$ of vertices simultaneously adjacent to the vertices already in the clique. When the size of the set $S$ is small enough, then the exhaustive search of Carraghan and Pardalos \cite{CP90} is performed.

\item {\em Fitness}. The fitness of a chromosome is equal to the size of the clique that it represents.

\item {\em Selection}. They use binary tournament selection, where the candidate with better fitness is selected with a pre-fixed probability $p_b$.

\item {\em Replacement policy}. No duplicate chromosomes are permitted in the population. When a new child is constructed, it always replaces the worst member of the population, irrespective of its own fitness.

\end{itemize}

It seems natural trying to adapt this algorithm to our case. Unfortunately, one cannot afford to explicitly construct the adjacency lists of $G_t$, for values $t>5$, since they grow exponentially in $t$, as we showed in the section before.

So the extension function described in \cite{SG06} cannot be applied in the case of the graph $G_t$. Nevertheless, we can use instead the algorithm proposed in the previous section.

\begin{algorithm} GA looking for cliques in $G_t$.

Substitute the extension function  in \cite{SG06} by Algorithm 1.

\end{algorithm}

We show now some executions for $2 \leq t \leq 9$, where the size of the population is fixed to 5 and the maximum number of generations is fixed to 20.

\vspace*{-.4cm}
\begin{table}[ht]\caption{Results obtained from Algorithm 2.}$$\begin{array}{|c||c|c|c|c|c|}\hline \mbox{t}& \mbox{Time} &
\mbox{\# generations} & \mbox{\#\{$C:$ $|C|$ is maximal\} } & |C| & PH \mbox{ size} \\
\hline\hline
 2 & 0,171" & 0 & 5 & 5 & 8 \\ \hline
 3 & 0,359" & 0 & 5 & 9 & 12 \\ \hline
 4 & 1,872" & 0 & 5 & 13 & 16 \\ \hline
 5 & 3,931" & 0 & 3 & 17 & 20 \\ \hline
 6 & 19,36" & 0 & 3 & 21 & 24 \\ \hline
 7 & 4'10" & 20 & 3 & 17 & 20 \\\hline
 8 & 24'37" & 20 & 1 & 21 & 24 \\ \hline
 9 & 4^h 36" & 20 & 1 & 18 & 21 \\ \hline
\end{array}$$\end{table}

We next include explicitly the cliques found so far. For brevity, we give the decimal number representations of the binary $4t$-vectors which form each clique.

\begin{table}[ht]\caption{Cliques obtained from Algorithm 2 listed explicitly.}$$\begin{array}{|c||l|}\hline
 \mbox{t}&  \mbox{Clique} \\ \hline\hline
 2 & 89,169,195,149,101 \\ \hline
 3 &  2396,922,3347,3214,2635,2290,2854,2473,1386 \\ \hline
 4 &  11633,27850,20148,40089,29719,6098,14940,43414, 22981,40038,15011,42693,13740 \\ \hline
 5 & 355028,694485,626266,436558,57188,250510, 647277,308937,189682,\\
   & 809194,832054,113080,211555,381267,603590,684888,584579 \\  \hline
6 &  9162129,15599647,12071353,7947574,2834188,10323501,6722981,7449237,\\
  & 5936794,3845219,2986835,13186247,14739211,5822797,5711193,13280116,\\
  & 14785656,11687509,9747734,11764174,13835043 \\  \hline
7 & 206229172,136297864,145421137,239938137,244421436,78464909, 111023826, \\
&  96903644,49849529,150232156,47868686,213067927,46619057,237397306, \\
&  193760535,174494351,223891867 \\ \hline
8 & 925672665,545242888,131329093,1388111142,3800772080,
2494264851,2084807742,\\
& 3004829231,432983728,1265186469,1437667228,2976213301,
766141292,455842134, \\
   & 3399920157,3380112474,2022530125,
1674167067,3521287529,517486905,1394045560  \\ \hline
9 & 6263028814,17380851458,66723786135,13185706648,61523714894, 11518454374, \\
&
28777615826,31813316858,7484361626,45216625720,10357786275,32845624107,\\
&
64532207782,48511720424,46756144798,24818822896,53243861654,27484036889
\\ \hline
\end{array}$$\end{table}

A comparison with the results obtained in the section before, reveals that the genetic algorithm is not as useful as desired.

On one hand, the only improvements are obtained for the values $t=8,9$, and they are not significantly impressive. Moreover, it seems that quality (in terms of the size of the cliques obtained) relays on the extension function rather than in the genetic procedure itself.

On the other hand, each run of the extension function is very time-consu\-ming, so the required time for executing a full run of the genetic algorithm grows drastically. And there is no a dramatic increase in the size of the obtained cliques in return.

It would be desirable to look for a faster way of extending cliques, attending to the particular properties of our graph $G_t$. We tackle with this question in the next section.

\subsection{A fast heuristic search}

\label{subsec2}

In order to get a faster heuristic search, we need to have a deeper look at the adjacency relations between the vertices in $G_t$.

We first translate Proposition \ref{prop2} (which characterizes the set of $s$-vectors orthogonal to a given $k$-vector in terms of distributions of total coincidences $(\alpha _{i_1},\alpha _{i_2},\alpha _{i_3},\alpha _{i_4})$),  in terms of distributions $(i_1,i_2,i_3,i_4)$ of coincidences in 1s (for the first and last quarters) and 0s (for the second and third quarters).

\begin{proposition}\label{prop3} The set of $s$-vectors orthogonal to a given $k$-vector corresponds to the full set of distributions of $s$-vectors satisfying
tuples of coincidences in 1s (first and fourth quarters) and 0s (second and third quarters) $(i_1,i_2,i_3,i_4)$ such that $i_1+i_2+i_3+i_4=2s+2k-t$. Furthermore, this is possible iff $2s+2k-t \geq 0$. \end{proposition}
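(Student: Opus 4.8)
The plan is to reduce the statement to the already-established Proposition \ref{prop2} and Lemma \ref{lema3} by a direct change of variables, and then to read off the existence condition from Proposition \ref{prop1}.

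First I would rewrite the total-coincidence condition of Proposition \ref{prop2} in terms of the $1$-bit (resp.\ $0$-bit) coincidence counts $i_j$. By Lemma \ref{lema3}, each quarter contributes $\alpha_{i_j} = t - k - s + 2i_j$ total coincidences. Substituting this into $\alpha_{i_1} + \alpha_{i_2} + \alpha_{i_3} + \alpha_{i_4} = 2t$ yields $4(t-k-s) + 2(i_1+i_2+i_3+i_4) = 2t$, which simplifies at once to $i_1 + i_2 + i_3 + i_4 = 2s + 2k - t$. This shows that a tuple $(\alpha_{i_1},\ldots,\alpha_{i_4})$ satisfies the orthogonality constraint if and only if the corresponding tuple $(i_1,\ldots,i_4)$ satisfies the stated linear relation, so the two descriptions index exactly the same set of orthogonal $s$-vectors.

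For the equivalence, necessity is immediate: each $i_j$ counts coincidences and is therefore a non-negative integer, so the identity $i_1 + i_2 + i_3 + i_4 = 2s + 2k - t$ forces $2s + 2k - t \geq 0$. For sufficiency I would invoke Proposition \ref{prop1}, which guarantees a solution to the system (\ref{sis1}) precisely when $4\alpha_0 \leq 2t \leq 4\alpha_n$. Recalling $\alpha_0 = t - s - k$ and $\alpha_n = t - |s-k|$, the left inequality $4(t-s-k) \leq 2t$ is equivalent to $2s + 2k - t \geq 0$, i.e.\ to our hypothesis.

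The one point requiring care---and the main obstacle---is the right inequality $2t \leq 4\alpha_n$, which is not implied by $2s+2k-t \geq 0$ in general. Here the normalization of Lemma \ref{lema1} does the work: since $0 \leq k,s \leq \lfloor \frac{t}{2} \rfloor$ we have $|s-k| \leq \lfloor \frac{t}{2} \rfloor \leq \frac{t}{2}$, whence $4\alpha_n = 4(t - |s-k|) \geq 4t - 2t = 2t$ automatically. Thus, under the standing assumption, the pair of inequalities in Proposition \ref{prop1} collapses to the single condition $2s + 2k - t \geq 0$, which closes the equivalence and completes the proof.
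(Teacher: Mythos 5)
Your proposal is correct and follows essentially the same route as the paper's proof: the identical change of variables $\alpha_{i_j}=t-k-s+2i_j$ (Lemma \ref{lema3}) substituted into the condition of Proposition \ref{prop2}, with necessity coming from $i_j\geq 0$ and sufficiency coming from the normalization $0\leq k,s\leq \lfloor \frac{t}{2}\rfloor$ of Lemma \ref{lema1}. The only difference is cosmetic: the paper certifies sufficiency directly in the $i$-coordinates, by checking that $2s+2k-t\leq 2\min(k,s)$ so the target sum lies in the achievable range $[0,4\min(k,s)]$ of $i_1+i_2+i_3+i_4$, whereas you certify it in the $\alpha$-coordinates by citing Proposition \ref{prop1} and observing that its upper inequality $2t\leq 4\alpha_n$ holds automatically since $|s-k|\leq \frac{t}{2}$ --- an algebraically equivalent verification.
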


\begin{proof}

From Proposition \ref{prop2}, we know that the set of $s$-vectors orthogonal to a given $k$-vector is characterized by those distributions of total coincidences $(\alpha _{i_1}, \alpha _{i_2}, \alpha _{i_3}, \alpha _{i_4})$ such that $\alpha _{i_1}+\alpha _{i_2}+\alpha _{i_3}+ \alpha _{i_4}=2t$.

Since $\alpha _i=t-s-k+2i$ from Lemma \ref{lema3}, the relation above comes to be $$4t-4k-4s+2i_1+2i_2+2i_3+2i_4=2t\Leftrightarrow i_1+i_2+i_3+i_4=2s+2k-t.$$

Now, on one hand, since $0 \leq i_j \leq \min (k,s)$, the  value $i_1+i_2+i_3+i_4$ runs over the range $0 \leq i_1+i_2+i_3+i_4 \leq 4 \min (k,s)$.

On the other hand, since $0 \leq s,k \leq \lfloor \frac{t}{2} \rfloor$, it is clear that $2s+2k-t \leq 2 \min (k,s)$.

Thus, provided $2s+2k-t \geq 0$, this value is in the range valid for $i_1+i_2+i_3+i_4$, and therefore there exists a distribution of coincidences in 1s (at first and fourth quarters, $i_1$ and $i_4$ respectively) and 0s (at second and third quarters, $i_2$ and $i_3$ respectively), such that $i_1+i_2+i_3+i_4=2s+2k-t$.

$\Boxe$

\end{proof}

Now it is apparent that not all possible values $s$ in the range $0 \leq s \leq \lfloor \frac{t}{2} \rfloor$ do provide $s$-vectors orthogonal to a given $k$-vector.

\begin{proposition}
Fixed a $k$-vector {\bf v}, there exist $s$-vectors orthogonal to {\bf v} iff $s \in [ \lceil
\frac{t}{2} \rceil-k,\lfloor \frac{t}{2}\rfloor ]$.
\end{proposition}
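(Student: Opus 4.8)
The plan is to derive the stated range for $s$ directly from Proposition \ref{prop3}, which has already done the essential orthogonality bookkeeping. The only fact I need is the necessary and sufficient condition established there: for a fixed $k$-vector, there exist $s$-vectors orthogonal to it if and only if $2s+2k-t \geq 0$, where we are always working under the standing assumption $0 \leq s,k \leq \lfloor \frac{t}{2}\rfloor$ from Lemma \ref{lema1}. So the proof reduces to solving this linear inequality for $s$ and checking that the resulting interval is consistent with the a priori bound $s \leq \lfloor \frac{t}{2}\rfloor$.

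First I would rewrite $2s+2k-t \geq 0$ as $s \geq \frac{t}{2}-k = \frac{t-2k}{2}$. Since $s$ is an integer, this is equivalent to $s \geq \lceil \frac{t-2k}{2}\rceil = \lceil \frac{t}{2}\rceil - k$, using that $2k$ is an even integer so it passes through the ceiling cleanly. This supplies the lower endpoint of the claimed interval. The upper endpoint $s \leq \lfloor \frac{t}{2}\rfloor$ is simply the standing range restriction on $s$ coming from Lemma \ref{lema1}, so no new work is needed there. Combining the two gives exactly $s \in [\lceil \frac{t}{2}\rceil - k, \lfloor \frac{t}{2}\rfloor]$.

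The one point that genuinely requires care — and what I would flag as the main obstacle — is verifying that this interval is nonempty and well-posed, i.e. that $\lceil \frac{t}{2}\rceil - k \leq \lfloor \frac{t}{2}\rfloor$, so that the "iff" statement is not vacuously describing an empty set of admissible $s$. This follows because $k \geq 0$ forces $\lceil \frac{t}{2}\rceil - k \leq \lceil \frac{t}{2}\rceil$, and one checks by splitting into the parities of $t$ that $\lceil \frac{t}{2}\rceil - \lfloor \frac{t}{2}\rfloor$ equals $0$ when $t$ is even and $1$ when $t$ is odd, so nonemptiness of the interval holds precisely when $k \geq 1$ in the odd case and always in the even case. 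I would therefore verify that the extreme value $s=\lceil \frac{t}{2}\rceil - k$ indeed makes $2s+2k-t$ equal $0$ or $1$ (its minimal nonnegative value), confirming the endpoint is attained rather than merely bounding.

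In short, the argument is a short deduction: translate the orthogonality existence criterion of Proposition \ref{prop3} into an inequality on $s$, apply integrality via the ceiling function, and intersect with the range $[0,\lfloor \frac{t}{2}\rfloor]$ guaranteed by Lemma \ref{lema1}. I expect the whole proof to fit in a few lines, with the parity-of-$t$ endpoint check being the only step meriting explicit comment.
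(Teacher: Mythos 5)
Your proof is correct and follows essentially the same route as the paper's: the lower bound $s \geq \lceil \frac{t}{2}\rceil - k$ is obtained from the criterion $2s+2k-t\geq 0$ of Proposition \ref{prop3} together with integrality of $s$, and the upper bound is simply the standing restriction from Lemma \ref{lema1}. Your additional parity check that the interval is nonempty and that the lower endpoint is attained is a harmless (and slightly more careful) supplement that the paper omits.
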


\begin{proof}

The upper bound is given in Lemma \ref{lema1}.

On the other hand, from Proposition \ref{prop3}, there exist $s$-vectors orthogonal to a given $k$-vector iff $0\leq 2s+2k-t$. Consequently, $s \geq \frac{t}{2}-k$, that is, $s \geq \D \lceil \frac{t}{2} \rceil-k$.

$\Boxe$

\end{proof}

Furthermore, we may straightforwardly precise the number of $s$-vectors orthogonal to a given $k$-vector, for some fixed $s  \in [ \lceil
\frac{t}{2} \rceil-k,\lfloor \frac{t}{2}\rfloor ]$.

\begin{lemma}
Fixed a valid distribution $(i_1,i_2,i_3,i_4)$, the number of $s$-vectors orthogonal to a given $k$-vector is given by the expression:
$$\left( \begin{array}{l}
k \\
i_1 \\
\end{array}\right)
\left( \begin{array}{l}
t-k \\
s-i_1 \\
\end{array}\right)
\left( \begin{array}{l}
k \\
i_2 \\
\end{array}\right)
\left( \begin{array}{l}
t-k \\
s-i_2 \\
\end{array}\right)
\left( \begin{array}{l}
k \\
i_3 \\
\end{array}\right)
\left( \begin{array}{l}
t-k \\
s-i_3 \\
\end{array}\right)
\left( \begin{array}{l}
k \\
i_4 \\
\end{array}\right)
\left( \begin{array}{l}
t-k \\
s-i_4 \\
\end{array}\right)
$$
\end{lemma}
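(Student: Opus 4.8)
The plan is to count the $s$-vectors ${\bf w}$ orthogonal to ${\bf v}$ quarter by quarter, exploiting the fact that the bits of ${\bf w}$ in the four quarters may be chosen completely independently, and then to multiply the four resulting counts. Recall from the structure of $k$- and $s$-vectors that in the first and fourth quarters ${\bf v}$ carries $k$ ones and $t-k$ zeros while ${\bf w}$ carries $s$ ones, whereas in the second and third quarters ${\bf v}$ carries $t-k$ ones and $k$ zeros while ${\bf w}$ carries $t-s$ ones (equivalently $s$ zeros). By the conventions fixed before Proposition \ref{prop3}, the datum $i_j$ prescribes the number of \emph{coincidences}, counted in $1$s for $j\in\{1,4\}$ and in $0$s for $j\in\{2,3\}$, in agreement with Lemma \ref{lema3}.

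First I would treat the first quarter, where coincidences are counted in $1$s. Imposing exactly $i_1$ coincidences amounts to selecting which $i_1$ of the $k$ one-positions of ${\bf v}$ are also ones of ${\bf w}$, giving $\left(\begin{array}{c}k\\i_1\end{array}\right)$ choices; the remaining $s-i_1$ ones of ${\bf w}$ must then be placed among the $t-k$ zero-positions of ${\bf v}$, giving $\left(\begin{array}{c}t-k\\s-i_1\end{array}\right)$ further choices. Hence the first quarter contributes the factor $\left(\begin{array}{c}k\\i_1\end{array}\right)\left(\begin{array}{c}t-k\\s-i_1\end{array}\right)$, and the fourth quarter, being of exactly the same type, contributes the analogous factor with $i_1$ replaced by $i_4$.

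Next I would treat the second quarter, where instead coincidences are counted in $0$s. Here ${\bf v}$ has $k$ zero-positions, so imposing $i_2$ coincidences in $0$s selects $i_2$ of them, giving $\left(\begin{array}{c}k\\i_2\end{array}\right)$ choices, while the remaining $s-i_2$ zeros of ${\bf w}$ must fall among the $t-k$ one-positions of ${\bf v}$, giving $\left(\begin{array}{c}t-k\\s-i_2\end{array}\right)$ choices; thus the factor is $\left(\begin{array}{c}k\\i_2\end{array}\right)\left(\begin{array}{c}t-k\\s-i_2\end{array}\right)$, and the third quarter contributes the same expression with $i_3$ in place of $i_2$. Multiplying the four independent factors yields exactly the product claimed in the statement.

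The step I would double-check most carefully — and the only genuine subtlety — is the bookkeeping of which symbol ($0$ or $1$) the coincidence count refers to in each quarter, so that the roles of $k$ and $t-k$ inside the two binomials are assigned correctly. The symmetry between the $1$-counting quarters $\{1,4\}$ and the $0$-counting quarters $\{2,3\}$ (swapping the roles of ones and zeros, but also swapping $k$ with $t-k$ for both ${\bf v}$ and ${\bf w}$) is precisely what forces all four factors to take the common shape $\left(\begin{array}{c}k\\i_j\end{array}\right)\left(\begin{array}{c}t-k\\s-i_j\end{array}\right)$. Provided $(i_1,i_2,i_3,i_4)$ is a valid distribution in the sense of Proposition \ref{prop3}, each binomial has admissible arguments, so the product is an exact, nonzero count, completing the argument.
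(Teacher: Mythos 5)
Your proof is correct, and it is essentially the argument the paper itself relies on: the lemma is stated there without a formal proof, but the quarter-by-quarter counting you give (choose which $i_j$ of the $k$ matching positions of ${\bf v}$ carry the coinciding symbol, then place the remaining $s-i_j$ symbols of ${\bf w}$ among the other $t-k$ positions, and multiply over the four independent quarters) is exactly the computation carried out concretely in the paper's worked example in Section 2 (the case $t=5$, $k=2$, with factors such as $\left(\begin{array}{c}2\\1\end{array}\right)\left(\begin{array}{c}5-2\\1\end{array}\right)$). Your closing observation about the $k \leftrightarrow t-k$ bookkeeping in the $0$-counting quarters is precisely the point that makes all four factors take the same shape, so nothing is missing.
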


The following tables show the number of $s$-vectors orthogonal to a given $k$-vector, for $3 \leq t \leq 10$, $0 \leq
k \leq  \lfloor \frac{t}{2} \rfloor$, and $0 \leq s \leq \lfloor \frac{t}{2} \rfloor$.

\begin{table}[ht]\caption{Distribution of $s$-vectors orthogonal to a $k$-vector.}$$\begin{array}{|c||c|c|}\hline
  & \multicolumn{2}{c|}{t=3} \\  \hline\hline
 & k=0 & k=1 \\ \hline
s=0 & 0 &  0  \\ \hline s=1 & 0 & 32  \\ \hline
 \end{array}  \; \begin{array}{|c||c|c|c||c|c|c|}\hline
  & \multicolumn{3}{c||}{t=4} & \multicolumn{3}{c|}{t=5}  \\  \hline\hline
 & k=0 & k=1 & k=2 & k=0 & k=1 & k=2 \\ \hline
s=0 & 0 & 0 & 1 & 0 & 0 &  0 \\ \hline
s=1 & 0 & 81 & 96 & 0 &0  & 216 \\
\hline s=2 & 1296 & 486 & 454 & 0  & 3456  & 3240 \\ \hline
 \end{array}$$

 $$\begin{array}{|c||c|c|c|c||c|c|c|c|}\hline
  & \multicolumn{4}{c||}{t=6} & \multicolumn{4}{c|}{t=7}  \\  \hline\hline
 & k=0 & k=1 & k=2 & k=3 & k=0 & k=1 & k=2 & k=3
 \\ \hline
s=0 & 0 & 0 & 0 & 1 & 0 & 0 & 0 & 0 \\
\hline s=1 & 0 & 0 & 256 & 486 & 0 & 0 & 0 & 768  \\ \hline
s=2 & 0 & 10000 & 14688 & 15795 & 0 & 0 & 40000 & 57024 \\
\hline
s=3 & 160000 & 60000 & 49920 & 47148 & 0 & 480000 & 440000
& 422208 \\ \hline
 \end{array}$$

$$\begin{array}{|c||c|c|c|c|c|}\hline
  & \multicolumn{5}{c|}{t=8}  \\  \hline\hline
 & k=0 & k=1 & k=2 & k=3 & k=4   \\ \hline
s=0 & 0 & 0 & 0 & 0 & 1  \\
\hline
s=1 & 0 & 0 & 0 & 625 & 1536   \\
\hline
s=2 & 0 & 0 & 50625 & 147000 & 183904   \\
\hline
s=3 & 0 & 1500625 & 2352000 & 2601000 & 2655744   \\
\hline
s=4 & 24010000 & 9003750 & 7183750 & 6483750 & 6297030   \\
\hline
 \end{array}$$

 $$\begin{array}{|c||c|c|c|c|c|}\hline
  &  \multicolumn{5}{c|}{t=9}  \\  \hline\hline
  & k=0 & k=1 & k=2 & k=3 & k=4  \\ \hline
s=0  & 0 & 0 & 0 & 0 & 0  \\
\hline
s=1 & 0 & 0 & 0 & 0 & 2000  \\
\hline
s=2  & 0 & 0 & 0 & 243000 & 464000  \\
\hline
s=3  & 0 & 0 & 7203000 & 11210000 & 12912000  \\
\hline
s=4 & 0 & 76832000 & 69629000 & 65367000 & 63430000  \\
\hline
 \end{array}$$

$$\begin{array}{|c||c|c|c|c|c|c|}\hline
  &  \multicolumn{6}{c|}{t=10}  \\  \hline\hline
  & k=0 & k=1 & k=2 & k=3 & k=4 & k=5 \\ \hline
s=0  & 0 & 0 & 0 & 0 & 0 & 1 \\
\hline
s=1 & 0 & 0 & 0 & 0 & 1296 & 3750  \\
\hline
s=2  & 0 & 0 & 0 & 194481 & 858600 & 1200625  \\
\hline
s=3  & 0 & 0 & 9834496 & 32773650 & 48326400 & 53560000  \\
\hline
s=4 & 0 & 252047376 & 407209600 & 453248775 & 468312600 & 472003750  \\
\hline
s=5 & 4032758016 & 1512284256 & 1180754176 & 1041640236 & 978746976 & 960098756  \\
\hline
 \end{array}$$\end{table}

In particular, these results suggest that large cliques in $G_t$ should consist of $k$-vectors, for large values of $k$, close to $\lfloor \frac{t}{2} \rfloor$.

This seems to be so, as the calculations below suggest.

For each $3 \leq t \leq 9$, we choose at random a Hadamard matrix of order $4t$ from Sloane's online library \cite{Slo}, say {\tt had.12}, {\tt had16.4}, {\tt had20.hall.n}, {\tt had24.pal}, {\tt had28.pal2}, {\tt had32.pal}, {\tt had36.pal2}.

We now normalize these matrices, by means of the following algorithm. Notice that since just negation and permutation of columns are used, the Hadamard character of the matrix is preserved.

\begin{algorithm} \label{normalizador} Hadamard normalization

\begin{itemize}

\item Negate those columns consisting of a first negative entry.

\item Now, locate those columns $i$, $1 \leq i \leq 2t$, consisting of a second negative entry. Locate those columns $j$, $2t+1 \leq j \leq 4t$, consisting of a second positive entry. Interchange them.

\item Proceed as the step before, now by quarters. As a result, you will obtain a normalized Hadamard matrix.

\end{itemize}

\end{algorithm}

Now we randomly fix a $\lfloor \frac{t}{2} \rfloor$-vector among the rows of these matrices (for instance, the first such occurrence). The table below shows the distribution of the values $s$, for the $s$-vectors of the remaining rows. In addition, we also include the distribution of (not ordered!) total coincidences $(\alpha _{i_1}, \ldots ,\alpha _{i_4})$.

\begin{table}[ht]\caption{Rows in Hadamard matrices are $k$-rows, for $k \in \{\lfloor \frac{t}{2} \rfloor-1,\lfloor \frac{t}{2} \rfloor\}$.}$$\begin{array}{|c|c|c|c|c|} \hline t & \mbox{row}&  s=\lfloor \frac{t}{2}\rfloor& s=\lfloor \frac{t}{2}\rfloor-1 &  \#\{\mbox{total coincidences}\}\\ \hline 3&4&8&0& (1,1,1,3)\rightarrow 8\\ \hline
4&6&8&4& \begin{array}{c} (2,2,2,2)\rightarrow 4\\(0,2,2,4)\rightarrow 4\\ (1,1,3,3)\rightarrow 4\end{array}\\ \hline
5&5&15&1& \begin{array}{c} (1,3,3,3)\rightarrow 12\\(1,1,3,5)\rightarrow 3\\ (2,2,2,4)\rightarrow 1\end{array}\\ \hline
6&4&11&9& \begin{array}{c} (2,2,4,4)\rightarrow 9\\(2,2,2,6)\rightarrow 1\\ (0,4,4,4)\rightarrow 1\\ (1,3,3,5)\rightarrow 6\\ (3,3,3,3)\rightarrow 3\end{array}\\ \hline
7&5&21&3& \begin{array}{c} (3,3,3,5)\rightarrow 16\\(1,3,5,5)\rightarrow 3\\(1,3,3,7)\rightarrow 2\\ (2,2,4,6)\rightarrow 2\\(2,4,4,4)\rightarrow 1\end{array}\\ \hline
8&4&12&16& \begin{array}{c} (2,4,4,6)\rightarrow 12\\(3,3,5,5)\rightarrow 12\\(1,5,5,5)\rightarrow 2\\(3,3,3,7)\rightarrow 2\end{array}\\ \hline
9&4&26&6& \begin{array}{c} (3,5,5,5)\rightarrow 20\\(1,5,5,7)\rightarrow 6\\(2,4,6,6)\rightarrow 6\end{array}\\ \hline \end{array}$$\end{table}

The table above suggests that one should focus on $k$-vectors for $k\in \{ \lfloor \frac{t}{2}\rfloor -1, \lfloor \frac{t}{2}\rfloor \}$. Furthermore, the vector of total coincidences per quarter uses to be homogeneously distributed.

With these ideas at hand, we next design a {\em fast} heuristic searching for (or eventually extending) cliques in $G_t$.

\begin{algorithm} \label{alg2} Fast heuristic for extending cliques in $G_t$.

\vspace*{.5cm}

{\tt

\noindent Input: a clique $C$ in  $G_t$\\
\noindent Output: a clique $C'$ in $G_t$ containing $C$

\vspace*{.5cm}

\noindent $C' = C$\\
\noindent for $k$ from $\lfloor \frac{t}{2} \rfloor$ to $\lfloor \frac{t}{2} \rfloor-1$ step $-1$ do\\
\hspace*{.5cm} $iter= 0$\\
\hspace*{.5cm} while $iter<t$ do\\
\hspace*{1cm} $\{bool,$v$\}=$buildgrapas$(C',k)$\\
\hspace*{1cm} If $bool$ then $iter = 0$; $C'=C'\cup \{$v$\}$ else $iter =iter+1$\\
\hspace*{.5cm} od\\
\noindent od\\
\noindent $C'$}

\end{algorithm}

The function {\tt buildgrapas} tries to construct an $s$-vector quarter by quarter (in a random ordering), attending to the following aspects:

\begin{itemize}

\item Select a number of total coincidences for the quarter, according to the range valid at this step (i.e. such that equation (\ref{sis1}) can be satisfied), and with probability proportional to the number of its appearances in the set of solutions described in Corollary \ref{bueno}.

\item Once the desired number of total coincidences has been fixed, a genetic procedure is performed, for constructing a valid quarter (i.e. such that (\ref{sis1}) can be satisfied). This heuristic consists of populations of $4t$ individuals. If no valid quarter is found after $4t$ generations, the search ends with a failure. In this case, the last quarter constructed so far is deleted, and the process goes on from this point. This situation is limited to occur at most $t$ times.

\item This search is performed at most 10 times. If no valid vector is constructed after these 10 attempts, the search stops and a {\sc False} boolean is returned.

\end{itemize}


The table below shows, for every $2\leq t \leq 10$, the number of essays which have been executed looking for cliques in $G_t$, the
average time required in these calculations, the average size of these cliques, the largest size found so far, the time required in this
calculation and one instance among the largest cliques already found.

\vspace*{-.4cm}
\begin{table}[ht]\caption{Results obtained from Algorithm 4.}$$\begin{array}{|c||c|c|c|c|c|l||}\hline
t& \mbox{Essays}& \mbox{Av.Time} &\mbox{Av.Size}  & \mbox{Lar.S.} & \mbox{Time} &\mbox{Clique}  \\
\hline\hline
2 & 10 &  1.4'' &5 & 5 & 1.4'' & 86, 101, 149, 89, 60 \\
\hline 3 & 10 & 0.565'' & 9 & 9 & 0.5'' & \begin{array}{l} 1452, 2396, 1393, 874, 756, 2482, 921,\\
 1242, 2281\end{array} \\
\hline 4 & 10 & 19.603'' & 11 & 12 & 17.28'' & \begin{array}{l} 25542, 15462, 13769, 39626, 50538,\\ 22099, 38294, 22188, 22876, 52421,\\
22947, 27802\end{array} \\
\hline  \end{array}$$ \end{table}

\begin{table}[ht]$$\begin{array}{|c||c|c|c|c|c|l||}\hline
t& \mbox{Essays}& \mbox{Av.Time} &\mbox{Av.Size}  & \mbox{Lar.S.} & \mbox{Time} &\mbox{Clique}  \\
\hline\hline
5 & 10 & 27.369'' & 9 & 17 & 76.924'' & \begin{array}{l} 193425, 586090, 808611, 420073\\ 350674, 408358, 222834, 109987,\\ 315192, 341833, 604856, 662897,\\ 171722, 308468, 121445, 218540,\\ 552900 \end{array} \\
\hline 6 & 10& 51.38'' & 7.6 & 10 & 56.87'' & \begin{array}{l} 13215089, 13015273, 10053835, 12875531,\\ 12954262, 6768979, 3454163, 9730420,\\ 3324617, 6759253, 5685752, 8874722,\\ 999010, 906181 \end{array} \\
\hline
7 & 10 & 1'23'' & 8 & 9 & 1'46'' & \begin{array}{l}159860692, 161016643, 73886147,\\ 142481171, 173143250, 170760901,\\
21953176, 43986594, 203259148 \end{array} \\ \hline
8 & 10 & 2'11'' & 7.6 & 9 & 2'53'' & \begin{array}{l} 866473426, 1497876124, 1273533381,\\ 1697995341, 1439971764, 3784746441,\\
2345981979, 2309832360, 242628440\end{array} \\ \hline  9 & 10 & 3'02'' & 7.8 & 9 & 3'05'' & \begin{array}{l} 56095030680, 38163367817,\\
41103334725, 54854789721,\\ 22744355553, 45279148512,
 \\ 4785499937, 52133944916,\\ 60209197830 \end{array} \\ \hline
 \end{array}$$\end{table}

Although the size of the cliques obtained so far are smaller than those constructed by the precedent procedures, it is a remarkable fact that this algorithm is substantially faster. In fact, this procedure should be considered as an extension function for cliques better than a procedure itself for constructing cliques starting from the empty graph.

This idea is supported by the calculations showed in the table below, where cliques $C$ of size $|C|\leq 2t$ in $G_t$ constructed by the procedures described in \cite{dLa00,dLG01} (after normalization by Algorithm \ref{normalizador}) are extended to larger cliques $C'$ (of size $|C'|\geq 2t$, more than a half of a full Hadamard matrix!) with Algorithm \ref{alg2}.

\begin{table}[ht]\label{delauneymejorado}\caption{Algorithm 4 applied to $PH_{m \times 4t}$ in \cite{dLG01} produces $PH_{(m+n)\times 4t}$, with $m+n \geq 2t$.}$$\begin{array}{|c|c|c|c|c|}\hline  t& |C|&|C'|&time& \mbox{added vertices}\\  \hline
4&5 & 12& 17.97''& \begin{array}{l} 21930, 50745, 25500, 13107,\\ 37740, 42330, 51510 \end{array}\\ \hline
5&5 & 7& 19.38''& 118659,341714\\ \hline
6&9 & 15& 1'07''& \begin{array}{l} 3099915, 9123660, 8841105,\\10606050, 4844385, 4692870 \end{array}\\ \hline
7&9 & 9& 1'03''& \\ \hline
8&13& 17& 2'51''&  \begin{array}{l} 1923517785, 3032697660,\\1695521520, 2956808130 \end{array}\\ \hline
\end{array}$$\end{table}

Notice that for $t=7$, the input clique has not been extended to a larger one. We suspect that the input clique is maximal, and therefore a larger clique containing it could not exist.

\section{Conclusions}

In this paper we have described three algorithms looking for pretty large partial Hadamard matrices  (i.e. about a third of a full Hadamard matrix), in terms of cliques of the Hadamard Graph $G_t$.

The first one (Algorithm 1) performed some kind of local exhaustive search, and consequently is expensive from the time consuming point of view. So we decided to design some heuristic for constructing partial Hadamard matrices.

Our first approach (Algorithm 2) consisted in an adaptation of Singh and Gupta's genetic algorithm for the Maximum Clique Problem. Unfortunately, it did not work properly, since it used Algorithm 1 for extending cliques, and consequently was very expensive in time as well.

Algorithm 4 prioritizes the required processing time better than the final size of the partial Hadamard matrix to be obtained. Experimental results show that this algorithm may output pretty large partial Hadamard matrices (larger than half a full Hadamard matrix!), provided a suitable initial clique is given as input data.

All the algorithms that we have presented here are based on the properties of the Hadamard Graph $G_t$ which have been described in Section 2.

It would be an interesting question whether different techniques and methods could be considered for designing alternative algorithms searching for large partial Hadamard matrices. For instance, one could ask about the techniques and methods which have been shown to be useful when manipulating full Hadamard matrices.

Unfortunately, this will not be the case, in general. For instance, consider the case of the cocyclic approach.

More concretely, the cocyclic framework has arised as a promising way to construct (cocyclic) Hadamard matrices
\cite{Hor06,AAFR06,AAFR08,AAFR09}. One could ask whether the cocyclic framework is also a good place to look for partial Hadamard
matrices. Actually, this is not the case.

\begin{proposition}

The depth of any partial Hadamard matrix which is a submatrix of a cocyclic matrix $M_f$ is at most a half of the size of $M_f$.

\end{proposition}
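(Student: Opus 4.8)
The plan is to reduce the orthogonality analysis of $M_f$ to the autocorrelation of a single sign function on the central extension group attached to $f$, and then finish by a pigeonhole argument. Recall that a normalized cocycle $f\colon G\times G\to\langle -1\rangle$ (with $|G|=n$) is the factor set of a central extension $1\to\langle -1\rangle\to E\to G\to 1$, where $|E|=2n$, the quotient map is $\pi$, and a section $g\mapsto\hat g$ satisfies $\hat g_1\hat g_2=f(g_1,g_2)\,\widehat{g_1g_2}$; the entries of $M_f$ are $M_f(g_1,g_2)=f(g_1,g_2)$. I would first attach to $f$ the sign map $\Psi\colon E\to\langle -1\rangle$ defined by $\Psi(\hat g)=1$ and $\Psi(z\hat g)=-1$, where $z=-1$ is the central involution, so that $\Psi(zw)=-\Psi(w)$, and consider its autocorrelation $\widetilde C(\delta)=\sum_{y\in E}\Psi(\delta y)\Psi(y)$.

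The key step is to show that the inner product of rows $g_1,g_2$ of $M_f$ (taken over its $n$ columns) equals $\frac{1}{2}\,\widetilde C(\hat g_1\hat g_2^{-1})$. Indeed, substituting $y=\hat g_2\hat h^{-1}$ turns $\sum_{h\in G}\Psi(\hat g_1\hat h^{-1})\Psi(\hat g_2\hat h^{-1})$ into a sum of $\Psi(\delta y)\Psi(y)$ over exactly one representative $y$ of each coset $\{w,zw\}$ of $\langle z\rangle$, with $\delta=\hat g_1\hat g_2^{-1}$; and on each such coset $\Psi(\delta w)\Psi(w)+\Psi(\delta zw)\Psi(zw)=2\Psi(\delta w)\Psi(w)$, since the two sign flips cancel. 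Hence the full autocorrelation is twice the partial sum. Because $\hat g_1\hat g_2^{-1}=\pm\,\widehat{g_1g_2^{-1}}$ and $\widetilde C(z\delta)=-\widetilde C(\delta)$, the \emph{vanishing} of this inner product depends only on the quotient $g_1g_2^{-1}\in G$. Thus orthogonality of two rows of a cocyclic matrix is a property of their ratio in $G$ alone, and as a byproduct $M_f$ is Hadamard iff $\widetilde C(\widehat g)=0$ for every $g\neq 1$.

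Finally I would run the counting argument, in the case of interest where $M_f$ is not already a full Hadamard matrix (otherwise it is a complete Hadamard matrix, with nothing to prove). Then there is a \emph{bad} ratio $g_0\neq 1$ with $\widetilde C(\widehat{g_0})\neq 0$, so any two rows whose quotient is $g_0$ are non-orthogonal. If $R\subseteq G$ indexes the pairwise orthogonal rows of a partial Hadamard submatrix, then no $r_1,r_2\in R$ satisfy $r_1r_2^{-1}=g_0$, i.e. $R\cap g_0R=\emptyset$; since $|g_0R|=|R|$ this forces $2|R|=|R|+|g_0R|\le|G|=n$, giving depth $|R|\le n/2$. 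The main obstacle is the middle step: proving cleanly that orthogonality depends only on the group ratio, because the naive manipulation of $f(g_1,h)f(g_2,h)$ does \emph{not} collapse to a single row sum term by term (the stray factor $f(g_1,h)f(g_1,g_1^{-1}g_2h)$ fails to cancel pointwise). Passing through the extension group $E$ and its autocorrelation is precisely what makes the ratio-dependence transparent, and it also covers non-abelian $G$ without any change.
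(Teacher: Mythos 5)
Your argument is correct, and it reaches the key lemma by a genuinely different, self-contained route. The paper obtains the crucial fact by citation: by the cocyclic Hadamard test (Lemma 1.4 of \cite{HdL95}), rows $g_i\neq g_k$ of $M_f$ are orthogonal if and only if the row $g_ig_k^{-1}$ sums to zero, so a single row $g_k\neq g_1$ with nonzero sum makes every pair of rows with ratio $g_k$ non-orthogonal, and the bound follows by pairing rows $\{g_i,g_k^{-1}g_i\}$. You instead prove from scratch only the weaker (but sufficient) statement that orthogonality of two rows depends on their ratio $g_1g_2^{-1}\in G$ alone, by passing to the central extension $E$ and the autocorrelation $\widetilde C$ of the sign map $\Psi$: your identity (inner product) $=\frac12\widetilde C(\hat g_1\hat g_2^{-1})$ is sound, and combined with $\widetilde C(z\delta)=-\widetilde C(\delta)$ and $\hat g_1\hat g_2^{-1}=\pm\,\widehat{g_1g_2^{-1}}$ it gives ratio-dependence; the concluding count ($R\cap g_0R=\emptyset$, hence $2|R|\le n$) is the same pigeonhole as the paper's, and is in fact stated more carefully, since the paper's ``partition into pairs'' is literally a partition only when $g_k$ has order two, whereas your disjointness formulation covers ratios of any order. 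Both proofs must set aside the case that $M_f$ is itself Hadamard (where no bad ratio exists and the stated bound fails); you do so explicitly, the paper implicitly. Two caveats. First, the ``obstacle'' you invoke to motivate the detour through $E$ is illusory: the cocycle identity gives $f(g_1,h)f(g_2,h)=f(g_1g_2^{-1},g_2)\,f(g_1g_2^{-1},g_2h)$, whose first factor does not depend on $h$, so the inner product collapses directly to $\pm\sum_{h'}f(g_1g_2^{-1},h')$; this works verbatim for non-abelian $G$, is exactly the content of the cited Lemma 1.4, and yields a conclusion stronger than ratio-dependence (the inner product \emph{is} a row sum up to sign). Second, your opening identification of the inner product with $\sum_{h\in G}\Psi(\hat g_1\hat h^{-1})\Psi(\hat g_2\hat h^{-1})$ is asserted rather than checked; it does hold, because $\Psi(\hat g\hat h^{-1})=f(h,h^{-1})\,f(g,h^{-1})$, the factors $f(h,h^{-1})$ cancel in the product, and $h\mapsto h^{-1}$ merely reindexes $G$, but that one-line verification belongs in the proof.
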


\begin{proof}

Attending to the proof of the cocyclic Hadamard test in \cite{HdL95} (see Lemma 1.4 on p. 281), fixed a multiplicative group
$G=\{g_1=1,\ldots ,g_n\}$ and a cocyclic matrix $M_f=(f(g_i,g_j))$ over $G$, rows $g_i\neq g_k$ in $M_f$ are orthogonal if and only if the
summation of the row $g_ig_k^{-1}(\neq g_1)$ is zero (and consequently, the summation of the row $g_kg_i^{-1}$ as well).

If a row $g_k\neq g_1$ in $M_f$ fails to sum zero, then, for each $1 \leq i \leq n$, the pair of rows $\{g_i,g_k^{-1}g_i\}$ (and also
$\{g_i,g_kg_i\}$) fails to be orthogonal. By partitioning the rows of $M_f$ into pairs of the type $\{g_i,g_k^{-1}g_i\}$, it turns out that
such a pair contributes at most one row to a partial Hadamard matrix included in $M_f$. Consequently, the depth of any partial Hadamard
matrix which is a submatrix of $M_f$ is at most $\frac{n}{2}$, as claimed.

$\Boxe$

\end{proof}

It would be interesting to think about the way in which $k$-vertices in $G_t$ could be combined in order to get larger cliques.

Nevertheless, it would be also interesting to investigate whether improved versions of the algorithms described in this paper could be designed, attending to other considerations.

%
%

\begin{acknowledgements}

All authors are partially supported by the research projects FQM--016 and P07-FQM-02980 from Junta de Andaluc\'{\i}a and MTM2008-06578 from Ministerio de Ciencia e Innovaci\'{o}n (Spain).
\end{acknowledgements}



\end{document}